\definecolor{refkey}{rgb}{0,0,1}
\definecolor{labelkey}{rgb}{1,0,0}
\newtheorem{thm}{Theorem}[section]
\newtheorem{lem}[thm]{Lemma}
\newtheorem{cor}[thm]{Corollary}
\theoremstyle{definition}
\newtheorem{defn}[thm]{Definition}
\newtheorem{rem}[thm]{Remark}
\numberwithin{equation}{section}
\newcommand{\A}{\mathcal{A}}        
\renewcommand{\a}{\chi}             
\newcommand{\C}{\mathbb{C}}         
\newcommand{\R}{\mathbb{R}}         
\newcommand{\Z}{\mathbb{Z}}         
\newcommand{\cop}{\Delta}           
\newcommand{\ox}{\otimes}           
\DeclareMathOperator{\id}{id}       
\begin{document} 
\title{Braided Hopf algebras from twisting.} 
\author[A.\ Bochniak]{Arkadiusz Bochniak}
\address{Institute of Physics, Jagiellonian University,
prof.\ Stanis\l awa \L ojasiewicza 11, 30-348 Krak\'ow, Poland \newline\indent
Copernicus Center for Interdisciplinary Studies, Szczepańska 1/5, 31-011 Kraków, Poland}

\author[A.\ Sitarz]{Andrzej Sitarz${}^*$} 
\address{Institute of Physics, Jagiellonian University,
prof.\ Stanis\l awa \L ojasiewicza 11, 30-348 Krak\'ow, Poland.\newline\indent
 Institute of Mathematics of the Polish Academy of Sciences,
\'Sniadeckich 8, 00-656 Warszawa, Poland.}
\thanks{${}^*$Partially supported by NCN grant 2015/19/B/ST1/03098}
\email{andrzej.sitarz@uj.edu.pl}   
\subjclass[2010]{58B34, 58B32, 46L87} 
\begin{abstract} 
We show that a class of braided Hopf algebras, which includes the braided $SU_q(2)$ of 
\cite{KMRW14}, is obtained by twisting. We show further examples and demonstrate that 
twisting of bicovariant differential calculi gives braided bicovariant differential 
calculi.
\end{abstract} 
\maketitle 
\section{Introduction}

Hopf algebras are a natural generalization of symmetries in mathematics, providing an unifying
picture of discrete groups, Lie groups and Lie algebras. Apart from their role as symmetries of
hypothetical quantum space-time in models of quantum gravity, they appear in the description
of renormalization and arise in integrable models and around solutions of Yang-Baxter equation.
Some interesting generalizations of Hopf algebras are, however, slightly different, as their
coproduct is an algebra homomorphism only if one passes to the braided category. Such objects are
natural candidates to generalize symmetries of quantum physical systems with nontrivial 
statistics, like anyons, but they appear frequently as models of noncommutative spaces and
noncommutative geometries \cite{CoLa}. So far the existence of braided symmetries there has 
been not studied there.
 
In \cite{KMRW14} the authors introduced and studied a family of deformations of the algebra of 
functions on the $SU(2)$ group, which are braided Hopf algebras. Though the deformation and the
description in the above paper was in the $C^\ast$-algebraic language it is interesting to notice 
that the structure is easily translated for the Hopf algebra of the quantum $SU(2)$ group. 

Our original motivation was to construct braided differential calculi for the braided $SU_q(2)$. 
In this note we show that the general framework of twisting provides the answer, that twisting
applied to bicovariant differential calculi over the Hopf algebra (as classified by Woronowicz in 
\cite{Wo89}) gives braided bicovariant calculus.

We provide also a further class of examples that include braided $SU_q(n)$ and braided quantum 
double torus.

Throughout the paper we work with braided Hopf algebras, which are Hopf algebras in a braided tensor 
category, with a given braiding $\Psi$ as defined in \cite{majid95}.
\section{Twisting Hopf Algebras}

Let $H$ be a Hopf algebra with the coproduct $\Delta$, the 
counit $\epsilon$ and the antipode $S$. Let $\a: H \to \C[z,z^{-1}]$ 
be a Hopf algebra homomorphism, where $\C[z,z^{-1}]$ is the
Hopf algebra of the group algebra of $\Z$. 

\begin{lem}
The following maps define left and right coactions of $\C[z,z^{-1}]$ on $H$, 
$$ \cop_L = (\a \ox \id) \cop, \quad \quad  \cop_R = (\id \ox \a) \cop, $$
which are equivariant under the coproduct in $H$.
\end{lem}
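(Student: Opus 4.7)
The proof rests on two ingredients: coassociativity in $H$, and the defining identities for $\chi$ as a Hopf algebra homomorphism, namely $(\chi \otimes \chi)\Delta = \Delta_{\C[z,z^{-1}]}\chi$ and $\eps_{\C[z,z^{-1}]}\chi = \eps_H$.

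First I would verify that $\Delta_L = (\chi \otimes \id)\Delta$ is a left coaction. Applying $\Delta_{\C[z,z^{-1}]} \otimes \id$ to $\Delta_L$ and pushing the $\C[z,z^{-1}]$-coproduct through $\chi$ by the homomorphism property produces $(\chi \otimes \chi \otimes \id)(\Delta \otimes \id)\Delta$; coassociativity immediately identifies this with $(\id \otimes \Delta_L)\Delta_L$. The counit axiom is the standard one in $H$ composed with $\eps_{\C[z,z^{-1}]}\chi = \eps_H$. The verification for $\Delta_R$ is entirely symmetric.

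Next, I read \emph{equivariance under the coproduct} as the assertion that $\Delta \colon H \to H \otimes H$ intertwines the coactions, i.e.\ $(\Delta_L \otimes \id)\Delta = (\id \otimes \Delta)\Delta_L$ on the left and $(\id \otimes \Delta_R)\Delta = (\Delta \otimes \id)\Delta_R$ on the right. Each identity, unfolded in Sweedler notation, collapses to $\chi(h_{(1)}) \otimes h_{(2)} \otimes h_{(3)}$ (or its mirror) after one use of coassociativity, so both checks are automatic; notably this step does not even require $\chi$ to be a coalgebra map. The same template also produces the bicomodule compatibility $(\Delta_L \otimes \id)\Delta_R = (\id \otimes \Delta_R)\Delta_L$, so $H$ is a $\C[z,z^{-1}]$-bicomodule.

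I expect no real obstacle: the whole lemma is a matter of tracking Sweedler indices carefully, with coassociativity doing all of the work. The only minor subtlety is recognising that the Hopf-homomorphism property of $\chi$ is needed only for the coaction axioms themselves, while the equivariance assertions are purely consequences of coassociativity.
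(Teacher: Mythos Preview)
Your proposal is correct and follows essentially the same route as the paper: the coaction axioms are verified by combining coassociativity with the coalgebra-map identities $(\chi\otimes\chi)\Delta=\Delta_{\C[z,z^{-1}]}\chi$ and $\eps_{\C[z,z^{-1}]}\chi=\eps_H$, and the equivariance $(\id\otimes\Delta)\Delta_L=(\Delta_L\otimes\id)\Delta$ is read off directly from coassociativity in Sweedler notation. Your additional remarks (the bicomodule compatibility and the observation that equivariance needs only coassociativity, not the coalgebra-map property of $\chi$) go slightly beyond what the paper records here but are correct and harmless.
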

\begin{proof}
We check only $\Delta_L$. Using the fact that $\a$ is Hopf algebra morphism:
$$ 
\begin{aligned}
(\id \otimes \Delta_L) \Delta_L(x) & = (\id \ox \a \ox \id) (\id \ox \Delta) (\a \ox \id) \Delta(x)  
= \a(x_{(1)}) \ox \a(x_{(2)}) \ox x_{(3)} \\
& = \left(\Delta_{\C[z,z^{-1}]}\ox \id\right) (\a(x_{(1)})  \ox x_{(2)} ) =
 ( \Delta_{\C[z,z^{-1}]} \ox \id ) \Delta_L(x),
\end{aligned}
$$
and
$$ (\epsilon_{\C[z,z^{-1}]} \ox \id) \cop_L = (\epsilon_{\C[z,z^{-1}]} \ox \id) (\a \ox \id) \cop
= (\epsilon \ox \id) \cop = \id. $$
The equivariance under the coproduct in $H$ means:
$$ 
\begin{aligned}
(\id \otimes \Delta) \Delta_L(x) & = (\id \otimes \Delta) (\a(x_{(1)}) \ox x_{(2)}) =
\a(x_{(1)}) \ox x_{(2)} \ox x_{(3)} \\
& = \Delta_L(x_{(1)}) \ox x_{(2)}   = ( \Delta_L \ox \id ) \Delta (x).
\end{aligned}
$$
\end{proof}
\begin{defn}
We say that $x \in H$ is a homogeneous element of degrees 
$\mu(x),\nu(x)$, where $\mu(x), \nu(x) \in \Z$ if:
$$ \cop_L(x) = z^{\mu(x)} \ox x, \quad \quad \cop_R(x) = x \ox z^{\nu(x)}. $$
We also define the index 
$$\delta(x)=\mu(x)-\nu(x).$$
\end{defn}

We shall study now properties of the maps $\mu$ and $\nu$. 

\begin{lem}\label{add}
The degrees are additive, that is:
$$ \mu(xy) = \mu(x)+\mu(y), \qquad \qquad \nu(xy) = \nu(x)+\nu(y). $$
\end{lem}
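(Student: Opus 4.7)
The plan is to exploit the fact that $\cop_L$ and $\cop_R$ are algebra homomorphisms. Since $H$ is a Hopf algebra, the coproduct $\cop$ is an algebra map; since $\a$ is a Hopf algebra morphism, it too is (in particular) an algebra map. Hence the compositions $\cop_L=(\a\ox\id)\cop$ and $\cop_R=(\id\ox\a)\cop$ are algebra maps into $\C[z,z^{-1}]\ox H$ and $H\ox\C[z,z^{-1}]$ respectively, each equipped with its standard tensor-product algebra structure. This observation carries essentially the whole content of the lemma.

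Given homogeneous $x,y\in H$, I would then evaluate $\cop_L(xy)$ by the algebra-map property:
$$ \cop_L(xy) = \cop_L(x)\cop_L(y) = (z^{\mu(x)}\ox x)(z^{\mu(y)}\ox y) = z^{\mu(x)+\mu(y)}\ox xy. $$
This identity is precisely the statement that $xy$ is homogeneous with left degree $\mu(x)+\mu(y)$; rather than assuming homogeneity of $xy$ up front and then comparing degrees, the single computation simultaneously shows that $xy$ is homogeneous and identifies its degree. The analogous calculation applied to $\cop_R$ delivers $\nu(xy)=\nu(x)+\nu(y)$.

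I do not anticipate a real obstacle. The one step one has to notice is purely conceptual, namely that $\cop_L$ and $\cop_R$ are algebra maps because both $\cop$ and $\a$ are. Once that is in hand, additivity of $\mu$ and $\nu$ is automatic, and in fact the lemma can be rephrased as saying that the left and right $\C[z,z^{-1}]$-coactions endow $H$ with $\Z$-gradings compatible with its algebra structure.
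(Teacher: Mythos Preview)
Your argument is correct and is essentially a fleshed-out version of the paper's one-line justification, which merely notes that the claim follows from the definition of the degrees and the fact that $\C[z,z^{-1}]$ is a $\Z$-graded algebra. Your explicit computation $\cop_L(xy)=z^{\mu(x)+\mu(y)}\ox xy$ is exactly what the paper leaves implicit.
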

This follows directly from the definition of degrees and the fact that 
$\C[z,z^{-1}]$ is a $\Z$-graded algebra.


{}From now on we assume that the Hopf algebra $H$ has a countable basis of 
homogeneous elements, so that any element of $H$ could be presented as a finite
sum of elements with fixed degrees $\mu$ and $\nu$.
\begin{lem}
\label{degprop}
We have the following identities for any $x \in H$ and  
$\Delta x = \sum_i x_{(1)}^i \otimes  x_{(2)}^i$, where $x_{(1)}^i$
and $x_{(2)}^i$ are homogeneous:
$$ \mu(x) = \mu(x_{(1)}^i), \qquad \nu(x) =  \nu(x_{(2)}^i), 
\qquad \hbox{for any $i$}, $$
and
$$ \mu(x_{(2)}^i) = \nu(x_{(1)}^i), \qquad \hbox{for any $i$}.$$
\end{lem}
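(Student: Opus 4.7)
The plan is to reduce each identity to the equivariance of the coactions $\Delta_L, \Delta_R$ (from the previous lemma) together with coassociativity of $\Delta$, and then read off the degrees by linear independence of the homogeneous components.

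For the first identity, I would apply the equivariance $(\id \otimes \Delta)\Delta_L = (\Delta_L \otimes \id)\Delta$ to $x$. The left-hand side equals $z^{\mu(x)} \otimes \sum_i x_{(1)}^i \otimes x_{(2)}^i$, while the right-hand side equals $\sum_i z^{\mu(x_{(1)}^i)} \otimes x_{(1)}^i \otimes x_{(2)}^i$ by homogeneity of $x_{(1)}^i$. Comparing coefficients in the first tensor factor (after grouping summands so that the tensors $x_{(1)}^i \otimes x_{(2)}^i$ are linearly independent, which is possible thanks to the standing assumption of a homogeneous basis of $H$) yields $\mu(x_{(1)}^i) = \mu(x)$ for every $i$. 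The identity $\nu(x_{(2)}^i) = \nu(x)$ is proved by an identical argument with $\Delta_R$ in place of $\Delta_L$.

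For the third identity, I would feed $x$ into coassociativity $(\Delta \otimes \id)\Delta = (\id \otimes \Delta)\Delta$ and then apply $\id \otimes \a \otimes \id$ to both sides. On the left this gives $\sum_i (\id \otimes \a)\Delta(x_{(1)}^i) \otimes x_{(2)}^i = \sum_i \Delta_R(x_{(1)}^i) \otimes x_{(2)}^i = \sum_i x_{(1)}^i \otimes z^{\nu(x_{(1)}^i)} \otimes x_{(2)}^i$, using homogeneity of $x_{(1)}^i$. On the right one obtains $\sum_i x_{(1)}^i \otimes \Delta_L(x_{(2)}^i) = \sum_i x_{(1)}^i \otimes z^{\mu(x_{(2)}^i)} \otimes x_{(2)}^i$. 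Equality of these expressions, combined once more with linear independence of $x_{(1)}^i \otimes x_{(2)}^i$, forces $\nu(x_{(1)}^i) = \mu(x_{(2)}^i)$ for every $i$.

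The only genuine subtlety I expect is the linear independence step: the Sweedler-type decomposition $\Delta x = \sum_i x_{(1)}^i \otimes x_{(2)}^i$ is not unique, so one must legitimately compare coefficients. This is handled by using the hypothesis that $H$ admits a countable basis of homogeneous elements, which lets us refine any such decomposition into one whose summands are linearly independent while remaining homogeneous. The rest is routine manipulation of the coactions.
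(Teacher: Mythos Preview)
Your proposal is correct and follows essentially the same route as the paper: the first identity is obtained exactly as you describe from the equivariance $(\id\otimes\Delta)\Delta_L=(\Delta_L\otimes\id)\Delta$, the paper then just says ``similarly we prove the remaining identities,'' and your coassociativity argument with $\id\otimes\a\otimes\id$ is precisely the natural way to unpack that for the third identity. Your explicit treatment of the linear-independence subtlety matches the paper's appeal to the homogeneous basis and is, if anything, more carefully stated.
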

\begin{proof}
From the identity:
$$  (\id \ox \Delta) \Delta_L = ( \Delta_L \ox \id) \Delta, $$ 
we see that for a homogeneous $x$ of degree $\mu(x)$:
$$ (\hbox{id} \otimes \Delta) \Delta_L(x) =  z^{\mu(x)} \otimes \Delta(x) = 
\sum_i z^{\mu(x)} \otimes x_{(1)}^i \otimes  x_{(2)}^i, $$
but on the other hand:
$$  ( \Delta_L \ox \id) \Delta (x) = \sum_i \Delta_L(x_{(1)}^i) \otimes x_{(2)}^i $$
By comparing these two expressions and taking into account that each $x_{(1)}^i$ in the sum 
are homogeneous and are the basis of the algebra, we obtain 
$$ \mu(x) = \mu(x_{(1)}^i), \qquad \hbox{for every} \, i. $$
Similarly we prove the remaining identities.
\end{proof}
In particular, as a consequence we have:
\begin{cor}
If $x = \sum_i x_{(1)}^i \otimes x_{(2)}^i$ then, for any $i$: 
\begin{equation}
\delta(x) = \delta(x_{(1)}^i) +  \delta(x_{(2)}^i). 
\label{degad}
\end{equation} 
\end{cor}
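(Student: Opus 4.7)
The plan is to derive this corollary as a direct algebraic consequence of Lemma~\ref{degprop}, with no new input required beyond unfolding the definition $\delta(x) = \mu(x) - \nu(x)$.

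First, I would fix a homogeneous $x \in H$ with coproduct written, as in the statement, as $\Delta x = \sum_i x_{(1)}^i \otimes x_{(2)}^i$ where each tensor factor is homogeneous (which is always achievable by the standing assumption of a homogeneous basis). Then, for any index $i$, I would expand
\[
\delta(x_{(1)}^i) + \delta(x_{(2)}^i) = \bigl(\mu(x_{(1)}^i) - \nu(x_{(1)}^i)\bigr) + \bigl(\mu(x_{(2)}^i) - \nu(x_{(2)}^i)\bigr).
\]

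Next, I would substitute the three identities provided by Lemma~\ref{degprop}: namely $\mu(x_{(1)}^i) = \mu(x)$, $\nu(x_{(2)}^i) = \nu(x)$, and the crucial middle identity $\mu(x_{(2)}^i) = \nu(x_{(1)}^i)$. The two middle terms $-\nu(x_{(1)}^i) + \mu(x_{(2)}^i)$ then cancel, leaving exactly $\mu(x) - \nu(x) = \delta(x)$.

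Because the computation passes through term by term for each fixed $i$ (and the $i$-dependent quantities cancel by virtue of the third identity in Lemma~\ref{degprop}), there is really no obstacle here. The only subtlety worth flagging is that the equality is \emph{pointwise in $i$}, rather than only after summation, which is essential if one later wants to use \eqref{degad} to decompose coproducts into fixed-$\delta$ pieces. This already follows from the pointwise formulation of Lemma~\ref{degprop} and does not require any additional genericity hypothesis on the basis beyond the homogeneous basis already assumed.
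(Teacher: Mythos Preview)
Your proposal is correct and matches the paper's intent: the corollary is stated immediately after Lemma~\ref{degprop} as a direct consequence, with no separate proof given, and your line-by-line unfolding of $\delta = \mu - \nu$ together with the three identities of that lemma is exactly the computation the reader is expected to supply.
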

\begin{lem}
\label{antipode}
For each homogeneous $x$ we have:
\begin{equation}
\mu(S(x))=-\nu(x), \qquad \nu(S(x))=-\mu(x).
\end{equation} 
\end{lem}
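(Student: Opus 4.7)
The plan is to combine the anti-coalgebra-homomorphism property of the antipode,
$\cop\circ S = \tau\circ(S\ox S)\circ\cop$ (with $\tau$ denoting the tensor flip on $H\ox H$),
with the fact that $\a$ is a Hopf algebra morphism and hence intertwines the antipodes:
$\a\circ S = S_{\C[z,z^{-1}]}\circ\a$, where the antipode on $\C[z,z^{-1}]$ acts by $z^n\mapsto z^{-n}$.

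First I would compute $\cop_L(S(x)) = (\a\ox\id)\cop(S(x))$ and substitute $\cop(S(x)) = \tau\,(S\ox S)\,\cop(x)$.
The key bookkeeping is to push $\a$ past $\tau$ and $S$: one checks directly that
$(\a\ox\id)\,\tau\,(S\ox S) = \tau\,(S\ox \a S) = \tau\,(S\ox S_{\C[z,z^{-1}]})\,(\id\ox\a)$, so that
$$\cop_L(S(x)) \;=\; \tau\,(S\ox S_{\C[z,z^{-1}]})\,\cop_R(x).$$
Feeding in the homogeneity of $x$, $\cop_R(x) = x\ox z^{\nu(x)}$, and applying
$S\ox S_{\C[z,z^{-1}]}$ followed by the flip, I obtain $\cop_L(S(x)) = z^{-\nu(x)}\ox S(x)$. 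This simultaneously confirms that $S(x)$ is left-homogeneous and identifies $\mu(S(x)) = -\nu(x)$.

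The second identity $\nu(S(x)) = -\mu(x)$ follows from the completely symmetric computation: start from $\cop_R(S(x)) = (\id\ox\a)\cop(S(x))$, commute $\a$ across the flip via $(\id\ox\a)\,\tau\,(S\ox S) = \tau\,(S_{\C[z,z^{-1}]}\ox S)\,(\a\ox\id)$, and then substitute $\cop_L(x) = z^{\mu(x)}\ox x$ to land on $S(x)\ox z^{-\mu(x)}$.

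I do not expect any real obstacle: the only subtlety is tracking where $\tau$ sits when commuting $\a$ across $S\ox S$. Nothing beyond the Hopf algebra axioms for $H$, the explicit antipode on $\C[z,z^{-1}]$, and the homogeneity hypothesis on $x$ is needed; in particular the proof does not go through Lemma \ref{degprop}, although that lemma would give an alternative, summand-by-summand route via $\cop(S(x)) = \sum_i S(x_{(2)}^i)\ox S(x_{(1)}^i)$.
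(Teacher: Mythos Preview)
Your proposal is correct and follows essentially the same route as the paper: both use the anti-coalgebra property $\cop\circ S=\tau(S\ox S)\cop$, the intertwining $\a\circ S=S_{\C[z,z^{-1}]}\circ\a$, and the explicit antipode $z^n\mapsto z^{-n}$ to rewrite $\cop_L(S(x))$ in terms of $\cop_R(x)$ (and symmetrically for the other identity). Your write-up is simply more explicit about tracking the flip $\tau$, whereas the paper compresses the same computation into Sweedler notation.
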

\begin{proof}
Since for each element $x$ we have:
$$ \Delta (S(x)) = S(x_{(2)}) \ox S(x_{(1)}), $$
composing it with $\a$, which is a Hopf algebra morphism, so
$\a(S(x)) =S_{\C[z,z^{-1}]} (\a(x))$ and using the form of the antipode 
on ${\C[z,z^{-1}]}$ algebra we obtain that for a homogeneous $x$
with degrees $\mu(x),\nu(x)$ we have:
$$\Delta_L(S(x)) = S_{\C[z,z^{-1}]}(z^{\nu(x)})  \ox S(x) = z^{-\nu(x)} \ox S(x),$$
$$\Delta_R(S(x)) = S(x) \ox S_{\C[z,z^{-1}]}(z^{\mu(x)}) = S(x) \ox z^{-\mu(x)}.$$
\end{proof}
One of the consequences of the compatibility of coactions with the coproduct and the antipode is the following fact:
\begin{lem}
\label{counit}
For any homogeneous element $x$,  $\epsilon(x)=0$ if $\delta(x)\not=0$.
\end{lem}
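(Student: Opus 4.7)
The plan is to compute $\alpha(x)$ in two different ways, using the left and right $\mathbb{C}[z,z^{-1}]$-coactions, and compare. Because $\alpha \colon H \to \mathbb{C}[z,z^{-1}]$ is a Hopf algebra morphism, it intertwines the counits, so information about $\epsilon(x)$ should be extractable from $\alpha$ together with the grading data $\mu(x), \nu(x)$.

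Concretely, I would first apply $\epsilon_H \otimes \id$ to the identity
\[
\Delta_R(x) = (\id \otimes \alpha)\Delta(x) = x \otimes z^{\nu(x)},
\]
valid for a homogeneous $x$. The left-hand side yields
\[
(\epsilon_H \otimes \id)(\id \otimes \alpha)\Delta(x) = \alpha\bigl((\epsilon_H \otimes \id)\Delta(x)\bigr) = \alpha(x),
\]
while the right-hand side gives $\epsilon_H(x)\, z^{\nu(x)}$. Hence $\alpha(x) = \epsilon_H(x)\, z^{\nu(x)}$. The symmetric computation using $\id \otimes \epsilon_H$ applied to $\Delta_L(x) = z^{\mu(x)} \otimes x$ yields $\alpha(x) = \epsilon_H(x)\, z^{\mu(x)}$.

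Combining the two expressions, if $\epsilon_H(x) \neq 0$ then $z^{\mu(x)} = z^{\nu(x)}$ in $\mathbb{C}[z,z^{-1}]$, which forces $\mu(x) = \nu(x)$, i.e.\ $\delta(x) = 0$. The contrapositive is exactly the statement of the lemma. I don't anticipate a real obstacle here: the only mild subtlety is remembering that $\alpha$ commutes with counits (which is built into the definition of a Hopf algebra morphism), and that the monomials $z^n$ are linearly independent in $\mathbb{C}[z,z^{-1}]$, which is what allows the cancellation $z^{\mu(x)} = z^{\nu(x)} \Rightarrow \mu(x) = \nu(x)$.
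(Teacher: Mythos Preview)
Your argument is correct and is essentially identical to the paper's proof: the paper computes $m\circ(\a\otimes\epsilon)\Delta(x)=\a(x)=z^{\mu(x)}\epsilon(x)$ and $m\circ(\epsilon\otimes\a)\Delta(x)=\a(x)=\epsilon(x)z^{\nu(x)}$, which is exactly your pair of computations rephrased in terms of $\Delta_L$ and $\Delta_R$, and then draws the same conclusion $z^{\delta(x)}\epsilon(x)=\epsilon(x)$.
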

\begin{proof}
Using the property that $\Delta, \epsilon, \a$ are morphisms we have:
$$ m \circ (\a \ox \epsilon) \Delta(x) = \a(x_{(1)}) \epsilon(x_{(2)}) = \a(x). $$
However, for a homogeneous $x$:
$$ m (\a \ox \epsilon) \Delta(x) = z^{\mu(x)} \epsilon(x). $$
On the other hand:
$$ m (\epsilon \ox \a) \Delta(x) = \epsilon(x_{(1)}) \a(x_{(2)})  = \a(x), $$
and
$$ m (\epsilon \ox \a) \Delta(x) = \epsilon(x)z^{\nu(x)}. $$
Therefore, for any homogeneous $x$ we have:
$$ z^{\delta(x)} \epsilon(x) = \epsilon(x). $$
\end{proof}
\subsection{The twisted product}

\begin{lem}\label{cocycle}
The following map defined on homogeneous elements $x,y \in H$ for any $\phi \in \R$:
$$\Phi (x, y) = e^{i \phi(\mu(x) \nu(y)  - \nu(x) \mu(y))}$$
extends to a two-cocycle on $H$, that is a map $ \Phi: H \otimes H \to U(1)$, 
which satifies for all $x,y,z \in H$ :
$$ \Phi(x, yz) \Phi(y,z) = \Phi(x,y) \Phi(xy,z).$$
\end{lem}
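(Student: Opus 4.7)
My plan is to verify the cocycle equation directly on homogeneous triples and then extend by trilinearity, since by assumption $H$ has a basis of homogeneous elements.

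First, I would reduce the cocycle identity to an identity on homogeneous elements: for any homogeneous $x, y, z$, both sides of $\Phi(x, yz)\Phi(y,z) = \Phi(x,y)\Phi(xy,z)$ involve only products and therefore, by Lemma \ref{add}, only the pairs $(\mu, \nu)$ applied to $x, y, z, xy, yz$, all of which are sums of the degrees of the constituent homogeneous factors. The statement then extends to arbitrary $x, y, z \in H$ by trilinear extension of $\Phi$ from the homogeneous basis.

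The central calculation is the observation that the exponent
$$E(x,y) := \phi\bigl(\mu(x)\nu(y) - \nu(x)\mu(y)\bigr)$$
is bilinear in the degree pairs. Using additivity of $\mu$ and $\nu$ on products, one gets
$$E(x, yz) = E(x,y) + E(x,z), \qquad E(xy, z) = E(x,z) + E(y,z),$$
so that $\Phi$ is multiplicative in each slot when the other slot is fixed and homogeneous. Exponentiating,
$$\Phi(x, yz) = \Phi(x,y)\,\Phi(x,z), \qquad \Phi(xy, z) = \Phi(x,z)\,\Phi(y,z).$$
Substituting the first into the left-hand side of the cocycle equation and the second into the right-hand side, both sides reduce to $\Phi(x,y)\Phi(x,z)\Phi(y,z)$, so the identity holds.

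The only mild subtlety, and the place one must be slightly careful, is in the extension step: one must verify that defining $\Phi$ by the above formula on a homogeneous basis and extending trilinearly yields a well-defined map on $H \otimes H \otimes H$ satisfying the cocycle identity as an identity of functions (rather than only on basis triples). This is straightforward because the cocycle equation, once both sides are extended trilinearly, becomes a linear relation that holds on every basis triple and therefore on all of $H \otimes H \otimes H$. I do not anticipate any real obstacle; the content of the lemma is precisely the additivity of the degrees together with the antisymmetric bilinear form $(\mu, \nu) \otimes (\mu', \nu') \mapsto \mu\nu' - \nu\mu'$.
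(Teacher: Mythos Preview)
Your argument is correct and is essentially the paper's own proof: both use additivity of $\mu,\nu$ (Lemma~\ref{add}) to verify the cocycle identity on homogeneous triples directly, your version just factoring through the intermediate bimultiplicativity $\Phi(x,yz)=\Phi(x,y)\Phi(x,z)$, $\Phi(xy,z)=\Phi(x,z)\Phi(y,z)$ before recombining. One caution on your extension remark: a bilinear extension of $\Phi$ off the homogeneous basis no longer lands in $U(1)$, and the cocycle identity, being a product of values of $\Phi$, is not itself a trilinear relation and so does not extend by linearity to arbitrary $x,y,z$; the paper glosses over this too, and it is harmless here since $\Phi$ is only ever applied to homogeneous elements in the sequel.
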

\begin{proof}
Since for homogeneous elements $x,y \in \A$ we have Lemma 
\ref{add}, we further compute,
$$
\begin{aligned}
\Phi(x, yz) \Phi(y,z) &=  e^{i \phi(\mu(x) \nu(yz)  - \nu(x) \mu(yz))}  
e^{i \phi(\mu(y) \nu(z)  - \nu(z) \mu(y))} \\
 &=  e^{i \phi(\mu(x) (\nu(y)+\nu(z))  - \nu(x) (\mu(y)+\mu(z)))} 
 e^{i \phi(\mu(y) \nu(z)  - \nu(z) \mu(y))}\\
& =  e^{i \phi(\mu(x) \nu(y) - \nu(x) \mu(y))} 
e^{i \phi( \mu(xy) \nu(z))  - \mu(z) \nu(xy))} \\
 & =  \Phi(x,y) \Phi(xy,z).
\end{aligned}
$$
\end{proof}
As a natural consequence we have,
\begin{lem}
\label{algebra}
The following product defined on homogeneous elements of 
$H$ extends linearly to an associative product on $H$:
$$ x \ast y =   \Phi(x, y) \, x \cdot y. $$
\end{lem}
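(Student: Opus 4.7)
The plan is to check associativity on homogeneous triples; once this is verified, bilinear extension to all of $H$ is forced by the standing assumption that $H$ admits a countable basis of homogeneous elements. The only structural input needed beyond the definition is that the product of two homogeneous elements is again homogeneous with additive degrees, which is Lemma \ref{add}. This guarantees that expressions like $\Phi(xy, z)$ and $\Phi(x, yz)$ appearing in the associativity computation are well-defined.

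For homogeneous $x, y, z \in H$, I would expand both sides directly:
\begin{equation*}
(x \ast y) \ast z \;=\; \Phi(x,y)\, (xy) \ast z \;=\; \Phi(x,y)\,\Phi(xy, z)\, (xy)\cdot z,
\end{equation*}
and symmetrically
\begin{equation*}
x \ast (y \ast z) \;=\; \Phi(y,z)\, x \ast (yz) \;=\; \Phi(x, yz)\,\Phi(y,z)\, x \cdot (yz).
\end{equation*}
Associativity of the original multiplication in $H$ gives $(xy)\cdot z = x\cdot (yz)$, so the two expressions coincide precisely when
\begin{equation*}
\Phi(x,y)\,\Phi(xy,z) \;=\; \Phi(x,yz)\,\Phi(y,z),
\end{equation*}
which is exactly the $2$-cocycle identity established in Lemma \ref{cocycle}.

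No genuine obstacle arises: the lemma is essentially a reformulation of the cocycle condition in the language of deformed multiplication. The one minor point that deserves a remark is the passage from homogeneous triples to arbitrary elements. Since every element of $H$ has a unique finite expansion in the homogeneous basis, the formula $x \ast y = \Phi(x,y)\, x\cdot y$ extends unambiguously to a bilinear map $H \otimes H \to H$, and associativity on homogeneous basis triples then propagates to all triples of linear combinations by trilinearity of both sides of the associativity equation.
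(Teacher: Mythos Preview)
Your proof is correct and matches the paper's approach: the paper simply declares the lemma to be ``a natural consequence'' of the cocycle identity in Lemma~\ref{cocycle} without writing out the computation, and your argument spells out precisely that consequence---reducing associativity on homogeneous triples to the cocycle condition together with associativity of the original product, then extending bilinearly via the homogeneous basis.
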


Moreover, note that since $\mu(1)=\nu(1)=0$ the algebra we obtain is unital (with the same unit 
$1$ as in $H$). We will denote this algebra by $H_\phi$.
\subsection{The braiding}
We introduce the following prebraiding on $H \otimes H$.
\begin{lem}
The map defined on homogeneous elements $x,y \in H_\phi$:
\begin{equation}
\Psi (x \otimes y) = e^{2 i\phi \delta(x)\delta(y)} y \otimes x, 
\label{braid}
\end{equation}
defines a prebraiding on $H\ox H$.
\end{lem}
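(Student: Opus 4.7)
The plan is to verify the two defining properties of a prebraiding: the Yang–Baxter (braid) equation on $H\otimes H\otimes H$, and compatibility with the (twisted) multiplication, i.e. the hexagon identities expressing that $m$ is a morphism in the braided category. Both reduce to manipulations of the exponent $\delta(x)\delta(y)$ once we use the additivity of $\delta$ which follows from Lemma~\ref{add} (so $\delta(xy)=\delta(x)+\delta(y)$ for homogeneous $x,y$). Since $H$ admits a basis of homogeneous elements, it suffices to check everything on homogeneous tensors and extend linearly.

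First I would verify the braid relation
\[
(\Psi\ox\id)(\id\ox\Psi)(\Psi\ox\id) = (\id\ox\Psi)(\Psi\ox\id)(\id\ox\Psi)
\]
on a homogeneous tensor $x\ox y\ox z$. A direct computation shows that both compositions send $x\ox y\ox z$ to the tensor $z\ox y\ox x$ multiplied by the same symmetric phase
\[
e^{2i\phi(\delta(x)\delta(y)+\delta(x)\delta(z)+\delta(y)\delta(z))},
\]
so the identity holds trivially because the phase accumulated by the three successive transpositions depends only on the unordered pairs of indices, not on the order in which the swaps are performed.

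Next I would check the hexagon relations expressing compatibility of $\Psi$ with the twisted product $\ast$ of Lemma~\ref{algebra}:
\[
\Psi\circ(m_\ast\ox\id)=(\id\ox m_\ast)\circ(\Psi\ox\id)\circ(\id\ox\Psi),
\]
and similarly for multiplication on the right factor. Evaluating the left side on $x\ox y\ox z$ yields $e^{2i\phi\delta(x\ast y)\delta(z)}\,z\ox(x\ast y)$, while the right side produces $e^{2i\phi\delta(y)\delta(z)}e^{2i\phi\delta(x)\delta(z)}\,z\ox(x\ast y)$; the two phases coincide by the additivity $\delta(x\ast y)=\delta(x)+\delta(y)$, and the twisting cocycle $\Phi(x,y)$ passes through unchanged since $\Psi$ only transposes tensor factors without touching the grading of $x\ast y$ as a whole.

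The main obstacle, if any, is purely bookkeeping: one must be careful that the grading used in $\Psi$ (defined on $H$) is unaffected by the twisting (since $\Phi(x,y)$ is a scalar and so $\mu$, $\nu$, $\delta$ on $H_\phi$ agree with those on $H$), and that the naturality with respect to homogeneous decompositions justifies the linear extension. Beyond this, everything is forced by additivity of $\delta$ and the fact that the exponent in $\Psi$ is a symmetric bilinear form in the gradings.
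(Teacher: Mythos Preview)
Your computations are correct, but you are not verifying the defining property of a prebraiding as the paper understands it. A prebraiding on a monoidal category (in the sense of \cite{majid95}, which the paper cites) is a natural family $\Psi_{V,W}$ satisfying the \emph{hexagon axioms with respect to the tensor product}:
\[
\Psi_{X\otimes Y,\,Z}=(\Psi_{X,Z}\ox\id)(\id\ox\Psi_{Y,Z}),\qquad
\Psi_{X,\,Y\otimes Z}=(\id\ox\Psi_{X,Z})(\Psi_{X,Y}\ox\id).
\]
The paper's proof consists precisely of extending $\delta$ additively to tensor powers, $\delta(x_1\ox\cdots\ox x_n)=\sum_i\delta(x_i)$, thereby defining $\Psi$ on all $H^{\ox k}\ox H^{\ox l}$, and then checking these two identities. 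The Yang--Baxter relation is recorded only \emph{after} the proof, explicitly as a consequence (``As a result we have that $\Psi$ is a Yang--Baxter operator''), not as the definition.

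What you call the ``hexagon relations'' --- compatibility of $\Psi$ with the twisted multiplication $m_\ast$ --- are a different matter entirely: they are the conditions (\ref{i1})--(\ref{i2}) that $m_\ast$ be a morphism in the braided category, and the paper verifies them separately inside the proof of Theorem~\ref{braidedHopf}. They are not part of what makes $\Psi$ a prebraiding; they are part of what makes $(H_\phi,m_\ast)$ an algebra object in $\mathcal{H}_\phi^{\ox}$. So your argument establishes useful facts, and the underlying mechanism (additivity of $\delta$) is exactly the right one, but you have proved a consequence and a later theorem rather than the lemma as stated.
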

\begin{proof}
We can introduce degrees of tensor products of homogeneous elements, in the
following way. Taking  $x= x_1 \otimes x_2 \otimes \cdots \otimes x_n$,
we define:
$$ \delta \left( x \right) = \sum_{i=1}^n \delta(x_i). $$
then, for $x$ as above and $y = y_1  \otimes \cdots \otimes y_k$
we define the braiding on tensors as:
$$
\Psi \left( x \otimes y  \right)
= e^{ 2 i \phi \delta(x) \delta(y)}   
\left( y \otimes x  \right). 
$$
In this way we obtain a prebraided monoidal structure. Indeed one can easily check that
$$ \Psi\left( (x \otimes y) \otimes z \right) = (\Psi \otimes \id) (\id \otimes \Psi)(x \otimes y \otimes z), $$
$$ \Psi\left( x \otimes (y \otimes z) \right) = (\id \otimes \Psi) (\Psi \otimes \id)(x \otimes y \otimes z), $$
if $\delta$ satisfies $\delta(x\ox y)=\delta(x)+\delta(y)$.
\end{proof}

From this prebraided structure defined above we obtain a braided monoidal category of vector spaces, if as morphisms we take maps which commute with $\Psi$. For $F:H\rightarrow H$ it is sufficient to check that $\delta \circ F = \delta$. This follows from the fact that in this case the naturality conditions:
$$ \Psi (F \otimes \id) = (\id \otimes F) \Psi, \qquad
\Psi  (\id \otimes F) = (F \otimes \id) \Psi, $$ are satisfied.
We shall denote this category by 
$\mathcal{H}_{\phi}^{\ox}\equiv \left(\mathcal{H}_{\phi}^{\ox}, %
\mathfrak{Mor}(\mathcal{H}_{\phi}^{\ox}),\Psi\right)$.

As a result we have that $\Psi$ is a Yang-Baxter operator, which can be also checked directly:
$$ 
\begin{aligned}
(\id \otimes \Psi) (\Psi \otimes \id) &(\id \otimes \Psi) (x \otimes y \otimes z)= \\
& = e^{2i\phi (\delta(x) \delta(y) + \delta(x) \delta(z) + \delta(y) \delta(z))} (z \otimes y \otimes x)  \\
& = (\Psi \otimes \id) (\id \otimes \Psi)  (\Psi \otimes \id) (x \otimes y \otimes z),
\end{aligned}
$$
This follows again from the properties of $\delta$.

\subsection{The coproduct}
\begin{thm} 
\label{coass}
Using a similar the one used to define the braiding we construct the coproduct on $H_\phi$:
\begin{equation}
\Delta_\phi(x) = \sum_j e^{ i \phi \delta(x_{(1)}^j)  \delta(x_{(2)}^j)}  x_{(1)}^j \otimes  x_{(2)}^j.
\label{coprod}
\end{equation}
\end{thm}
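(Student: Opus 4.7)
The plan is to prove coassociativity of $\Delta_\phi$, i.e.\ $(\Delta_\phi\otimes\id)\Delta_\phi(x) = (\id\otimes\Delta_\phi)\Delta_\phi(x)$ for every homogeneous $x\in H$, by unpacking both iterated coproducts in terms of the classical triple Sweedler decomposition of $H$ and comparing the resulting phase factors. Well-definedness of $\Delta_\phi$ follows at once from Lemma \ref{degprop}: in any decomposition of $\Delta(x)$ into homogeneous tensors each $x_{(1)}^j$ and $x_{(2)}^j$ carries unambiguous degrees, so the phase $e^{i\phi\delta(x_{(1)}^j)\delta(x_{(2)}^j)}$ depends only on the homogeneous splitting and not on the basis.

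First, I would apply $\Delta_\phi\otimes\id$ to \eqref{coprod}. This yields a double sum whose phase is the product of $e^{i\phi\delta(x_{(1)}^j)\delta(x_{(2)}^j)}$ (from the outer coproduct) and $e^{i\phi\delta((x_{(1)}^j)_{(1)}^k)\delta((x_{(1)}^j)_{(2)}^k)}$ (from the inner one applied to $x_{(1)}^j$). Using coassociativity of the untwisted $\Delta$ in $H$ and writing the result as a triple Sweedler sum of homogeneous tensors $\sum x_{(1)}\otimes x_{(2)}\otimes x_{(3)}$, the three inner-degree labels become $\delta(x_{(1)}),\delta(x_{(2)}),\delta(x_{(3)})$, while \eqref{degad} gives $\delta(x_{(1)}^j)=\delta(x_{(1)})+\delta(x_{(2)})$. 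Expanding then collapses the two exponentials into the single phase
\[
e^{i\phi\bigl[\delta(x_{(1)})\delta(x_{(2)}) + \delta(x_{(1)})\delta(x_{(3)}) + \delta(x_{(2)})\delta(x_{(3)})\bigr]}.
\]

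An entirely parallel computation for $(\id\otimes\Delta_\phi)\Delta_\phi(x)$, in which the inner coproduct now acts on $x_{(2)}^j$ and one uses $\delta(x_{(2)}^j)=\delta(x_{(2)})+\delta(x_{(3)})$ instead, produces the same symmetric phase, so the two sides agree term by term. The only real obstacle is bookkeeping: one must correctly track which iterated homogeneous component carries which $\delta$-label and invoke the additivity of $\delta$ along the coproduct at exactly the right moment, so that the two asymmetric-looking quadratic phases on each side collapse to the common fully symmetric quadratic form $\sum_{i<j}\delta(x_{(i)})\delta(x_{(j)})$. No further structural input from $H$ is required.
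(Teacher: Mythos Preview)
Your argument is correct and follows essentially the same route as the paper: compute both iterated coproducts in Sweedler notation, invoke the additivity $\delta(x)=\delta(x_{(1)})+\delta(x_{(2)})$ (the corollary of Lemma~\ref{degprop}), and compare the resulting phases. The only difference is cosmetic: the paper leaves the final identification of the two phase factors implicit, whereas you make explicit that each side collapses to the common symmetric form $\sum_{i<j}\delta(x_{(i)})\delta(x_{(j)})$.
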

\begin{proof}
For simplicity we skip here the sum and the summation indices, one should remember that
we are using the basis of homogeneous elements and the coproduct $\Delta(x) = 	x_{(1)} \otimes  x_{(2)}$
is in fact a well-defined unique expression and it is meant to be $\sum_j x_{(1)}^j \otimes  x_{(2)}^j$ where
all terms are homogeneous elements of the basis.

Let us verify that the coproduct (\ref{coprod}) is coassociative:
$$ (\hbox{id} \otimes \Delta_\phi) \Delta_\phi(x) = e^{ i \phi \delta(x_{(1)})  \delta(x_{(2)})}  
 e^{ i \phi \delta(x_{(2,1)})  \delta(x_{(2,2)})}
x_{(1)} \otimes  x_{(2,1)} \otimes x_{(2,2)}, $$
but 
$$ (\Delta_\phi  \otimes \hbox{id}) \Delta_\phi(x) = e^{ i \phi \delta(x_{(1)})  \delta(x_{(2)})}  
 e^{ i \phi \delta(x_{(1,1)})  \delta(x_{(1,2)})}
x_{(1,1)} \otimes  x_{(1,2)} \otimes x_{(2)}, $$
Then, both sides are equal to each other if Lemma \ref{degprop} is satisfied.
\end{proof}
\begin{thm}
\label{morph}
The coproduct $\Delta_\phi$ (\ref{coprod}) satisfies
$$\cop_{\phi}(x \ast y)=\cop_{\phi}(x)\ast\cop_{\phi}(y),$$
where $(a\ox b)\ast (c \ox d):=a \Psi(b \ox c) d$.
\end{thm}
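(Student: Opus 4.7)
The plan is to verify the identity on homogeneous $x,y$ and extend by linearity. Writing $\Delta(x) = x_{(1)} \ox x_{(2)}$ and $\Delta(y) = y_{(1)} \ox y_{(2)}$ (with implicit summation over a homogeneous basis), both sides of the target identity turn out to be scalar multiples of the same tensor $x_{(1)} y_{(1)} \ox x_{(2)} y_{(2)}$, so the task reduces to matching the exponential phase factors in front.

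First I would expand the left-hand side. By definition of $\ast$ and since $\Delta$ is an algebra map in $H$,
$$\cop_{\phi}(x \ast y) = \Phi(x,y)\, \cop_{\phi}(xy) = \Phi(x,y)\, e^{i\phi\, \delta(x_{(1)}y_{(1)})\,\delta(x_{(2)}y_{(2)})}\, x_{(1)}y_{(1)} \ox x_{(2)}y_{(2)}.$$
Using Lemma \ref{add} to write $\delta(x_{(k)}y_{(k)}) = \delta(x_{(k)}) + \delta(y_{(k)})$, the left-hand exponent becomes
$$i\phi\bigl(\mu(x)\nu(y)-\nu(x)\mu(y)\bigr) + i\phi\bigl(\delta(x_{(1)})+\delta(y_{(1)})\bigr)\bigl(\delta(x_{(2)})+\delta(y_{(2)})\bigr).$$
For the right-hand side, the braided product unpacks as $(a\ox b)\ast(c\ox d)= \Phi(a,c)\Phi(b,d)\, e^{2i\phi\,\delta(b)\delta(c)}\, ac \ox bd$, so applying it to $\cop_\phi(x)\ast\cop_\phi(y)$ yields the same tensor with total exponent
$$i\phi\,\delta(x_{(1)})\delta(x_{(2)}) + i\phi\,\delta(y_{(1)})\delta(y_{(2)}) + 2i\phi\,\delta(x_{(2)})\delta(y_{(1)}) + i\phi\bigl(\mu(x_{(1)})\nu(y_{(1)})-\nu(x_{(1)})\mu(y_{(1)})\bigr) + i\phi\bigl(\mu(x_{(2)})\nu(y_{(2)})-\nu(x_{(2)})\mu(y_{(2)})\bigr).$$

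The final step is purely bookkeeping on the exponents. The key input is Lemma \ref{degprop}, which gives $\mu(x) = \mu(x_{(1)})$, $\nu(x) = \nu(x_{(2)})$ and $\nu(x_{(1)}) = \mu(x_{(2)})$ (and analogously for $y$). Setting $a=\mu(x_{(1)})$, $b=\nu(x_{(1)})=\mu(x_{(2)})$, $c=\nu(x_{(2)})$ and $p,q,r$ for the corresponding quantities of $y$, so that $\delta(x_{(1)})=a-b$, $\delta(x_{(2)})=b-c$, $\delta(y_{(1)})=p-q$, $\delta(y_{(2)})=q-r$, the equality of the two exponents reduces to a symmetric polynomial identity in $a,b,c,p,q,r$ that collapses after expansion; both sides simplify to $aq+bp+br+cq-2bq-2cp$ after removing the common $(a-b)(b-c)+(p-q)(q-r)$ contribution. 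I expect this bookkeeping to be the main (and essentially only) obstacle: it is conceptually trivial once Lemma \ref{degprop} is invoked, but care is needed to keep track of which degree is which. The statement then extends to arbitrary (not necessarily homogeneous) $x,y$ by bilinearity, since both $\ast$ and $\cop_\phi$ are linear.
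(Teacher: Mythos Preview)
Your proposal is correct and follows essentially the same approach as the paper: expand both sides on homogeneous elements, reduce to a comparison of scalar phase factors, and invoke Lemma~\ref{degprop} to match them. The only difference is that the paper leaves the final exponent comparison as ``a simple computation,'' whereas you carry it out explicitly via the substitution $a,b,c,p,q,r$; your bookkeeping checks out.
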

\begin{proof}
Let us take two homogeneous elements $x,y \in H$ with their coproducts:
$$ \Delta x = x_{(1)} \otimes  x_{(2)}, \qquad \qquad  \Delta y = y_{(1)} \otimes  y_{(2)}. $$
We compute first:
$$
\begin{aligned}
\Delta_\phi (x \ast y) & =  e^{i \phi(\mu(x) \nu(y)  - \nu(x) \mu(y))} \Delta_\phi(xy) \\
& = e^{i \phi(\mu(x) \nu(y)  - \nu(x) \mu(y))} 
e^{ i \phi (\delta(x_{(1)}) +  \delta(y_{(1)}))(\delta(x_{(2)}) + \delta(y_{(2)})}
 \left( x_{(1)} y_{(1)} \otimes  x_{(2)} y_{(2)} \right).
\end{aligned}
$$
On the other hand:
$$
\begin{aligned}
\Delta_\phi(x) \ast \Delta_\phi(y) &= 
e^{ i \phi \delta(x_{(1)})  \delta(x_{(2)})}  
e^{ i \phi \delta(y_{(1)})  \delta(y_{(2)})}  
\left( x_{(1)} \otimes  x_{(2)} \right) \ast \left( y_{(1)} \otimes  y_{(2)} \right) \\
&= 
e^{ i \phi \delta(x_{(1)})  \delta(x_{(2)})}  
e^{ i \phi \delta(y_{(1)})  \delta(y_{(2)})}  
e^{2 i \phi \delta(x_{(2)})  \delta(y_{(1)})}  \cdot \\
& \,\, \cdot e^{ i \phi (\mu(x_{(1)}) \nu(y_{(1)})  - \mu(y_{(1)}) \nu(x_{(1)})}   
e^{ i \phi (\mu(x_{(2)}) \nu(y_{(2)})  - \mu(y_{(2)}) \nu(x_{(2)})}   
\left(  x_{(1)} y_{(1)} \otimes  x_{(2)} y_{(2)} \right).
\end{aligned}
$$
A simple computation, using Lemma \ref{degprop} allows to demonstrate that both scalar factors are identical, hence the map $\Delta_\phi$ satisfies $\cop_{\phi}(x \ast y)=\cop_{\phi}(x)\ast\cop_{\phi}(y)$.
\end{proof}
\begin{thm}
\label{braidedHopf}
$H_\phi$ is a braided bialgebra in braided category with the braiding given by (\ref{braid}).
\end{thm}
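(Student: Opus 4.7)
The plan is to assemble the pieces already established in the previous lemmas and theorems, and then check the two remaining axioms: the counit property, and the fact that all structure maps live in the braided category $\mathcal{H}_{\phi}^{\ox}$.

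What is already in hand: Lemma \ref{algebra} gives an associative, unital algebra $(H_\phi,\ast,1)$; Theorem \ref{coass} gives coassociativity of $\Delta_\phi$; Theorem \ref{morph} gives that $\Delta_\phi$ is an algebra homomorphism with respect to the braided tensor product $(a\ox b)\ast(c\ox d)=a\,\Psi(b\ox c)\,d$. The counit I would propose is simply $\epsilon$, the counit inherited from $H$. For the counit axiom, I would expand
\[
(\epsilon\ox\id)\Delta_\phi(x)=\sum_j e^{i\phi\,\delta(x_{(1)}^j)\delta(x_{(2)}^j)}\,\epsilon(x_{(1)}^j)\,x_{(2)}^j,
\]
and then invoke Lemma \ref{counit}: the summands with $\epsilon(x_{(1)}^j)\neq 0$ force $\delta(x_{(1)}^j)=0$, so the phase collapses to $1$ and the expression reduces to $(\epsilon\ox\id)\Delta(x)=x$. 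The same argument works on the other side. To see that $\epsilon$ is a braided algebra morphism, note that $\epsilon(x\ast y)=\Phi(x,y)\,\epsilon(x)\epsilon(y)$; when $\epsilon(x)\epsilon(y)\neq 0$ Lemma \ref{counit} forces $\delta(x)=\delta(y)=0$, i.e.\ $\mu(x)=\nu(x)$ and $\mu(y)=\nu(y)$, and then $\mu(x)\nu(y)-\nu(x)\mu(y)=0$ so $\Phi(x,y)=1$.

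Next I would verify that the structure maps $m_\phi=\ast$, $\Delta_\phi$, unit $\eta$ and counit $\epsilon$ are morphisms in $\mathcal{H}_{\phi}^{\ox}$, which by the remark after the prebraiding lemma reduces to preservation of $\delta$. For multiplication, Lemma \ref{add} gives $\delta(x\ast y)=\delta(xy)=\delta(x)+\delta(y)=\delta(x\ox y)$. For the coproduct, the corollary after Lemma \ref{degprop} gives $\delta(x)=\delta(x_{(1)}^j)+\delta(x_{(2)}^j)=\delta(x_{(1)}^j\ox x_{(2)}^j)$ for each term in $\Delta_\phi(x)$. For the unit, $\delta(1)=\mu(1)-\nu(1)=0$; for the counit, Lemma \ref{counit} says $\epsilon$ vanishes outside degree zero, which is exactly the statement that $\epsilon$ preserves $\delta$.

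Putting everything together, $(H_\phi,\ast,1,\Delta_\phi,\epsilon)$ is a bialgebra in the braided monoidal category $\mathcal{H}_{\phi}^{\ox}$. No step is really hard here — the main content has already been carried by Theorems \ref{coass} and \ref{morph}; the one mildly subtle observation is that the obstruction to $\epsilon$ being a strict algebra morphism for $\ast$ is killed precisely by Lemma \ref{counit}, which forces the relevant cocycle phases to trivialize on elements where $\epsilon$ is nonzero.
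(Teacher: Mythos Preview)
Your proof is correct and follows essentially the same route as the paper: both assemble the algebra and coalgebra structures from the earlier lemmas and theorems, invoke Theorem~\ref{morph} for multiplicativity of $\Delta_\phi$, and use Lemma~\ref{counit} to kill the cocycle phase when checking that $\epsilon$ is an algebra map. The only presentational difference is that the paper verifies the compatibility of $m_\ast$ and $\Delta_\phi$ with $\Psi$ by writing out the four naturality identities \eqref{i1}--\eqref{i4} and computing the phases directly, whereas you argue more abstractly that each structure map preserves the extended degree $\delta$ and hence commutes with $\Psi$; both arguments come down to the additivity $\delta(xy)=\delta(x)+\delta(y)$ and $\delta(x)=\delta(x_{(1)})+\delta(x_{(2)})$.
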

\begin{proof}
Let us denote $m_\ast (x\otimes y)=x \ast y$. From Lemma \ref{algebra} we know that $(H_{\phi},m_\ast,1)$ is an algebra. Theorem \ref{coass} implies coassociativity of $\Delta_\phi$, properties of counit follows from Lemma \ref{counit}. It means that $(H_\phi,\Delta_\phi,\epsilon)$ forms coalgebra.

Now, we check that the structures 
$m_\ast,\Delta_{\phi},\varepsilon$ commute with $\Psi$, i.e.
\begin{align}
\Psi(\mathrm{id}\otimes m_\ast)=(m_\ast\otimes \mathrm{id})(\mathrm{id}\otimes \Psi)
(\Psi\otimes \mathrm{id}) \label{i1} \\
\Psi(m_\ast\otimes\mathrm{id})=(\mathrm{id}\otimes m_\ast)(\Psi\otimes\mathrm{id})
(\mathrm{id}\otimes\Psi) \label{i2}\\
(\mathrm{id}\otimes \Delta_{\phi})\Psi=(\Psi\otimes\mathrm{id})(\mathrm{id}\otimes \Psi)(\Delta_{\phi}\otimes \mathrm{id}) \label{i3}\\
(\Delta_{\phi}\otimes\mathrm{id})\Psi=(\mathrm{id}\otimes \Psi)(\Psi\otimes\mathrm{id})(\mathrm{id}\otimes\Delta_{\phi})\label{i4}
\end{align}
Indeed, one can easily compute that
$$\Psi(\mathrm{id}\otimes m_\ast)(x\otimes y\otimes z)=e^{i\phi\left(\mu(y)\nu(z)-\mu(z)\nu(y)\right)}e^{2 i\phi\delta(x)\delta(yz)}yz\otimes x$$ 
and
$$(m_\ast\otimes \mathrm{id})(\mathrm{id}\otimes \Psi)(\Psi\otimes \mathrm{id})=e^{i\phi\left(\mu(y)\nu(z)-\mu(z)\nu(y)\right)}e^{2 i\phi\delta(x)\delta(z)}e^{2 i\phi\delta(x)\delta(y)}yz\otimes x.$$
Hence we obtain (\ref{i1}) and similarly (\ref{i2}). To prove (\ref{i3}),(\ref{i4}) we do analogous computation but in the 
last step we use (\ref{degad}).

Moreover, by Theorem \ref{morph} we know that $\Delta_\phi$ is a morphism of algebras. 
To finish the proof we need to show that $\epsilon$ is an algebra map:
$$ \epsilon( x \ast y) = \epsilon(x) \epsilon(y).$$
By definition we have:
$$ \epsilon( x \ast y) = e^{i \phi(\mu(x) \nu(y)  - \nu(x) \mu(y))} \epsilon(x) \epsilon(y). $$
The only nontrivial case is when $\epsilon(x) \not=0 $ and $\epsilon(y) \not=0$. But by Lemma \ref{counit} then $\mu(x)=\nu(x)$ and $\mu(x)=\nu(y)$ hence the above equality holds.
\end{proof}
\subsection{The antipode}
Finally, we extend an antipode $S$ defining $S_\phi:H_\phi\rightarrow H_{\phi}$ in the following way.

\begin{thm}
\label{antithm}
The antipode, defined on homogeneous elements as:
$$S_{\phi}(x)=e^{i\phi\delta(x)^2}S(x),$$
makes $H_\phi$ is a braided Hopf algebra in $\mathcal{H}_{\phi}^{\otimes}$. 
\end{thm}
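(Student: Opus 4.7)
The plan is to build on Theorem~\ref{braidedHopf}, which already makes $(H_\phi, m_\ast, 1, \Delta_\phi, \epsilon)$ a braided bialgebra in $\mathcal{H}_\phi^\otimes$. Two checks remain: that $S_\phi$ is a morphism of the braided category (i.e.\ commutes with $\Psi$), and that the antipode axiom $m_\ast \circ (S_\phi \otimes \id) \circ \Delta_\phi = \epsilon(\cdot)\,1 = m_\ast \circ (\id \otimes S_\phi) \circ \Delta_\phi$ holds in $H_\phi$.

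For the morphism condition, by the discussion following the braiding lemma it suffices to verify $\delta \circ S_\phi = \delta$. The scalar factor in the definition of $S_\phi$ does not affect the bidegree, so the check reduces to $\delta \circ S = \delta$, which is immediate from Lemma~\ref{antipode}: $\mu(S(x)) - \nu(S(x)) = -\nu(x) + \mu(x) = \delta(x)$.

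For the antipode axiom, I would work on a homogeneous $x$ with $\Delta(x) = \sum_j x_{(1)}^j \otimes x_{(2)}^j$ in the homogeneous basis, writing $a = \mu(x_{(1)}^j)$, $b = \nu(x_{(1)}^j) = \mu(x_{(2)}^j)$, $c = \nu(x_{(2)}^j)$. By Lemma~\ref{degprop}, $a = \mu(x)$ and $c = \nu(x)$ are in fact independent of $j$, and the equality $b = \nu(x_{(1)}^j) = \mu(x_{(2)}^j)$ holds termwise. Unpacking the definitions, the $j$-th summand of $m_\ast\bigl((S_\phi \otimes \id)\Delta_\phi(x)\bigr)$ carries three phase contributions: $\delta(x_{(1)}^j)\delta(x_{(2)}^j)$ from $\Delta_\phi$, $\delta(x_{(1)}^j)^2$ from $S_\phi$, and the cocycle phase $\mu(S(x_{(1)}^j))\nu(x_{(2)}^j) - \nu(S(x_{(1)}^j))\mu(x_{(2)}^j)$ from $m_\ast$. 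Substituting $\mu(S) = -\nu$, $\nu(S) = -\mu$, and the $b$-identity, the total exponent collapses to $i\phi\, a(a-c) = i\phi\,\mu(x)\delta(x)$, which is $j$-independent and factors out of the sum. The expression becomes $e^{i\phi\mu(x)\delta(x)} \sum_j S(x_{(1)}^j)\, x_{(2)}^j = e^{i\phi\mu(x)\delta(x)}\,\epsilon(x)\cdot 1$. If $\delta(x) = 0$ the phase is trivial; if $\delta(x) \neq 0$ then $\epsilon(x) = 0$ by Lemma~\ref{counit}, so the identity holds in both cases. The mirror computation for $m_\ast\bigl((\id \otimes S_\phi)\Delta_\phi(x)\bigr)$ yields $e^{-i\phi\nu(x)\delta(x)}\,\epsilon(x)\cdot 1$ by the same mechanism.

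The main obstacle is the phase bookkeeping in the antipode axiom: all three sources of exponentials mix the integers $a, b, c$ nontrivially, and one must organise the cancellations via Lemma~\ref{degprop} so that every $j$-dependent cross-term disappears and only a global phase remains, which is then neutralised by Lemma~\ref{counit}. The specific formula $S_\phi(x) = e^{i\phi\delta(x)^2}S(x)$ is calibrated precisely to effect this collapse; any other choice of normalising phase would leave surviving $j$-dependent contributions and spoil the identity.
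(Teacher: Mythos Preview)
Your proposal is correct and follows essentially the same route as the paper: first reduce the morphism condition to $\delta\circ S=\delta$ via Lemma~\ref{antipode}, then unpack the three phase contributions in $m_\ast(S_\phi\otimes\id)\Delta_\phi(x)$ and collapse them to $e^{i\phi\,\mu(x)\delta(x)}\epsilon(x)$ using Lemma~\ref{degprop}, with Lemma~\ref{counit} killing the residual phase. Your treatment is in fact slightly more complete, since you also sketch the mirror identity $m_\ast(\id\otimes S_\phi)\Delta_\phi$, which the paper omits.
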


\begin{proof}
To show that it is also a Hopf algebra we need to check that $S_\phi\in \mathfrak{Mor}\left(\mathcal{H}_{\phi}^{\ox}\right)$. It is enough to check that $\delta\circ S=\delta$. But it simply follows from lemma \ref{antipode}.
\begin{equation}
(\delta \circ S)(x)=\mu(S(x))-\nu(S(x))=-\nu(x)-(-\mu(x))=\mu(x)-\nu(x)=\delta(x),
\end{equation}
i.e. $\delta\circ S=\delta$. Hence we have $S_\phi\in \mathfrak{Mor}\left(\mathcal{H}_{\phi}^{\ox}\right)$.

Next, we verify:
$$  
\begin{aligned}
m_\ast (S_\phi \otimes \hbox{id}) \Delta_\phi(x) 
&=  e^{ i \phi \delta(x_{(1)})  \delta(x_{(2)})}   S_\phi(x_{(1)}) \ast x_{(2)} \\
&=  e^{ i \phi \delta(x_{(1)})  \delta(x_{(2)})} e^{i\phi \delta(x_{(1)})^2 }  S(x_{(1)}) \ast x_{(2)} \\
&=  e^{ i \phi \delta(x_{(1)})  \delta(x_{(2)})} e^{i\phi \delta(x_{(1)})^2}  
    e^{ i \phi (-\nu(x_{(1)})  \nu(x_{(2)}) + \mu(x_{(2)}) \mu(x_{(1)}) )} S(x_{(1)}) x_{(2)} \\
&=  e^{ i \phi \mu(x_{(1)})  \delta(x) } \epsilon(x) = \epsilon(x).
\end{aligned}
$$
The last step follows from the fact that $\delta(x)$ does not vanish only on kernel of $\epsilon$, 
therefore once $\epsilon(x) \not= 0$ then the scalar factor in front $\epsilon(x)$ is $1$.
\end{proof}

We finish the section by observing that if $H$ is Hopf-$\ast$ algebra and $\a$ is a $\ast$-morphism then $H_\phi$ is a braided Hopf $\ast$-algebra with 
$$\begin{aligned}(x\ox y)^\ast=\psi(x,y)x^\ast\ox y^\ast\end{aligned},$$
where $\psi(x,y)$ is the phase factor in the braiding, i.e. $\Psi(x\ox y)=\psi(x,y)y\ox x.$
\begin{rem}
Observe, that due to the theorem of Schauenburg: \cite{Scha}, for a Hopf algebra 
in the prebraided monoidal category $(\mathcal{C},\Psi)$, we have:
$$	
\Psi = \left(m_* \otimes m_* \right)\left(S_\phi \otimes \left(\Delta_\phi \circ m_* \right)\otimes S_\phi \right)\left(\Delta_\phi \otimes \Delta_\phi \right).
$$	
We verify, that for two homogeneous elements $x,y \in H$:
$$
\begin{aligned}
\Psi(x,y) &= S_\phi(x_{(1)}) *  \left( x_{(2)} * y_{(1)} \right)_{(1)} \otimes  \left( x_{(2)} * y_{(1)} \right)_{(2)} * S_\phi (y_{(2)}) \\
 &= e^{ i\phi \zeta(x,y)} y \otimes x,
\end{aligned}
$$
where the phase $\zeta(x,y)$ reads, using shortcut $z =   x_{(2)} y_{(1)}$,
$$
\begin{aligned}
\zeta(x,y) =& \;
\delta(x_{(1)}) \delta(x_{(2)}) + \delta(y_{(1)}) \delta(y_{(2)}) + \delta(x_{(1)})^2   
+ \delta(y_{(2)})^2 \\
&+  \left( \mu(x_{(2)}) \nu(y_{(1)}) - \nu(x_{(2)}) \mu(y_{(1)}) \right)   
+ \delta(z_{(1)}) \delta(z_{(2)}) \\
& + \left( \mu(S x_{(1)}) \nu(z_{(1)}) - \mu(z_{(1)}) \nu(S x_{(1)}) \right)
     + \left( \mu(z_{(2)}) \nu(Sy_{(2)}) - \mu(Sy_{(2)}) \nu(z_{(2)}) \right)
\end{aligned}
$$
\end{rem}
Of course, the phase factor enters only when the relevant terms are non-zero, which happens only if the counit does not vanish, and that is exactly when:
$$\begin{aligned} \epsilon(x_{(1)})\epsilon((x_{(2)})_{(1)})\neq 0, \qquad \epsilon((y_{(1)})_{(2)})\epsilon(y_{(2)})\neq 0 \end{aligned}$$ 
Let's take elements with $x$ only:
$$\begin{aligned}\delta(x_{(1)})\delta(x_{(2)})+\delta( x_{(1)})^2+\delta((x_{(2)})_{(1)}) \delta((x_{(2)})_{(2)}) +(-\nu(x_{(1)})\nu((x_{(2)})_{(1)}) + \mu(x_{(1)})\mu((x_{(2)})_{(1)})) \end{aligned}$$ 
\subsection{Twisting and untwisting}

Let us note that the twisting of the product, coproduct and braiding depends 
solely on the bigrading $\mu,\nu$ that satisfied the properties from Lemmas 
\ref{add},\ref{degprop}, \ref{antipode} and Lemma \ref{counit}.

It is easy to see that neither of this properties is changed in any way if we pass to the 
algebra $H_\phi$ with the braiding (\ref{braid}). Both $H$ and $H_\phi$ share the same basis of elements
with fixed grading $\mu,\nu$ and since all products and coproducts are deformed in a 
homogeneous way, neither of above condition is changed. Therefore, $H_\phi$ can 
be further twisted using the above procedure and after a twist by an angle $\eta$
we obtain, in fact a Hopf algebra, which is a twist of the original Hopf algebra $H$ by 
$\phi+\eta$.
\section{Examples}

\subsection{The algebra $\C[z,z^{-1}]$.}\ \newline
Consider the $H=\C[z,z^{-1}]$ with $\a$ being the identity map. Since the algebra is cocommutative
then $\mu(x) \equiv \nu(x)$ for every homogeneous $x$ and therefore $H_\phi=H$ in this case.

\subsection{The algebra $C_\lambda(a,b)$.}\ \newline
Consider a unital Hopf algebra generated by $a,a^{-1},b$ with relation:
$$ a\, b = \lambda b \, a, $$
and coproduct,
$$ \Delta(a) =a \otimes a, \qquad \Delta(b) = a \otimes b + b \otimes 1, $$
counit and antipode:
$$ \epsilon(a)=1, \quad \epsilon(b)=0, \quad S(a)=a, \quad S(b) = -a^{-1} b. $$
If, additionally $\lambda$ is real then it is Hopf $\ast$-algebra with $b=b^*$ and
$a^* = a^{-1}$.

We take the map $\a: C_\lambda(a,b) \to \C[z,z^{-1}]$ as:
 $$ \a(a) =z, \qquad \a(b)=0, $$
which is a morphism of Hopf algebras (and of Hopf $\ast$-algebras if we
consider the $\ast$ structure). we easily see that:
$$ \mu(a) = \nu(a) = \mu(b) = 1, \qquad \nu(b) = 0, $$
so that $\delta(a) =0$ and $\delta(b) = 1$. 

The twisted braided Hopf algebra $C_\lambda(a,b)_\phi$ is then generated
by $a,b$ which obey the relation:
$$ a \ast b = e^{- 2 i \phi } \lambda b \ast a, $$
with the same coproduct $\Delta_\phi = \Delta$ and
the antipode:
$$ S_\phi(a) = S(a), \qquad S_\phi(b) = e^{i\phi} S(b), $$
with the only nontrivial braiding:
$$ \Psi(b,b) = e^{2i\phi} b \otimes b. $$

\subsection{The Hopf algebra  $\A(SU_q(n))$.}\ \newline
The $\A(SU_q(n))$ algebra is generated by a unitary matrix elements $u_{ij}, 1 \leq i,j \leq n,$
with relations:
$$ 
\begin{aligned}
&u_{ik} u_{jk} = q u_{jk} u_{ik}, (i<j), &
& u_{ki} u_{kj} = q u_{kj} u_{ki}, (i < j), \\
& u_{il} u_{jk} = u_{jk} u_{il}, (i <j; k < l), &
& u_{ik} u_{jl} - u_{jl} u_{ik} = (q - q^{-1}) u_{jk} u_{il}, (i < j; k < l), \\
& \sum_\sigma (-q)^{|\sigma|} u_{1\sigma(1)} \cdots u_{n\sigma(n)} =1,&
\end{aligned}
$$
where $|\sigma|$ is a number of inversions in permutation $\sigma\in S_n$.

The Hopf algebra structure on $\mathcal{A}(SU_q(n))$ is given by 
\begin{center}
$\Delta(u_{ij})=\sum\limits_{k}u_{ik}\otimes u_{kj}, \ \ \ \ \ \epsilon(u_{ij})=\delta_{ij}, \ \ \ \ \ S(u_{ij})=(u_{ji})^{\ast},$
\end{center}
where the $\ast$-structure is given by
\begin{equation}
(u_{ij})^{\ast}=(-q)^{j-i}\sum\limits_{\sigma}(-q)^{|\sigma|}u_{k_1\sigma(l_1)}...u_{k_{n-1}\sigma(l_{n-1})},
\end{equation}
where
\begin{center}
$(k_1,...,k_{n-1})=(1,...,n)\setminus\{i\}, \ \ \ \ (l_1,...,l_{n-1})=(1,...,n)\setminus\{j\}$
\end{center}
treated as ordered sets.

Let $p: \{1,2,\ldots,n\} \to {\mathbb Z}$ be a function such that $\sum\limits_{k=1}^{n} p(k) = 0$.

\begin{lem}The following map:
$$ \a_p: u_{ij} \mapsto z^{p(i)} \delta_{ij}, $$
is a $\ast$-algebra homomorphism from $\A(SU_q(n))$ to $\C[z,z^{-1}]$.
\end{lem}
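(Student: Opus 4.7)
The plan is to extend $\alpha_p$ multiplicatively from the generators and verify two things: that the prescribed values respect all the defining relations of $\A(SU_q(n))$, and that the map intertwines the two $\ast$-structures.

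For the algebra relations I would go down the list one by one. Each image $\alpha_p(u_{ij})\alpha_p(u_{kl}) = z^{p(i)+p(k)}\delta_{ij}\delta_{kl}$ lives in the commutative ring $\C[z,z^{-1}]$, so the first four relations reduce to showing that the Kronecker factors vanish on both sides. In every case the strict inequalities $i<j$ or $k<l$ labelling the relation prevent the required index coincidences: for example in $u_{ik}u_{jk}=qu_{jk}u_{ik}$ one would need simultaneously $i=k$ and $j=k$, contradicting $i<j$; the other three are analogous. The quantum determinant relation $\sum_\sigma(-q)^{|\sigma|}u_{1\sigma(1)}\cdots u_{n\sigma(n)}=1$ is the substantive case: the product $\prod_t\delta_{t,\sigma(t)}$ is nonzero only for $\sigma=\mathrm{id}$, and the single surviving term equals $z^{p(1)+\cdots+p(n)}$, which is $1$ precisely by the trace-free hypothesis $\sum_k p(k)=0$.

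For the $\ast$-structure I would substitute into the cofactor-type formula for $u_{ij}^\ast$. Interpreting $\sigma$ as a permutation of the second-index set $\{l_1,\dots,l_{n-1}\}=\{1,\dots,n\}\setminus\{j\}$, the image of $u_{ij}^\ast$ is, up to the overall sign $(-q)^{j-i}$, a signed sum over $\sigma$ of products $\prod_t z^{p(k_t)}\delta_{k_t,\sigma(l_t)}$. The product of Kronecker deltas is nonzero only when the unordered sets $\{k_t\}=\{1,\dots,n\}\setminus\{i\}$ and $\{\sigma(l_t)\}=\{1,\dots,n\}\setminus\{j\}$ coincide, forcing $i=j$; and then, since the tuples $(k_t)$ and $(l_t)$ are identical and both ordered, the coincidences $k_t=\sigma(l_t)$ force $\sigma=\mathrm{id}$. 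The single surviving term has exponent $\sum_{k\neq i}p(k)=-p(i)$ by the trace-free condition, giving $\alpha_p(u_{ij}^\ast)=z^{-p(i)}\delta_{ij}=(z^{p(i)}\delta_{ij})^\ast=\alpha_p(u_{ij})^\ast$, as required. The main obstacle is precisely this last step: parsing the permutation $\sigma$ in the cofactor formula and checking that the Kronecker constraints simultaneously pin down both $i=j$ and $\sigma=\mathrm{id}$ requires careful book-keeping, after which everything else reduces to elementary manipulation with the hypothesis $\sum_k p(k)=0$ entering decisively in exactly these two calculations.
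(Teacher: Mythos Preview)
Your argument is correct. The paper actually states this lemma without proof, so there is nothing to compare against directly; the authors evidently regard it as routine. Your write-up fills in precisely the details one would expect: checking that the images of the quadratic relations vanish because the Kronecker constraints are incompatible with the strict index inequalities (for the fourth relation the left-hand side already vanishes by commutativity of $\C[z,z^{-1}]$, not by a Kronecker argument, but the right-hand side does vanish for the reason you give), using $\sum_k p(k)=0$ to collapse the quantum determinant to $1$, and parsing the cofactor formula for $u_{ij}^\ast$ to obtain $\alpha_p(u_{ij}^\ast)=z^{-p(i)}\delta_{ij}$. The paper later quotes exactly this last identity, in the proof that $\alpha_p$ is a Hopf algebra morphism, as a ``simple calculation, using definition of $\ast$-structure''---which is the computation you have carried out explicitly.
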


\begin{lem}
$\a_p$ is a coalgebra morphism.
\end{lem}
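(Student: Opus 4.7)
The plan is to verify the coalgebra morphism conditions $(\a_p \otimes \a_p)\circ\Delta = \Delta_{\C[z,z^{-1}]}\circ \a_p$ and $\epsilon_{\C[z,z^{-1}]}\circ \a_p = \epsilon$ on the matrix generators $u_{ij}$, and then argue that this suffices for the whole algebra.

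First I would handle the coproduct condition on generators. Starting from $\Delta(u_{ij}) = \sum_k u_{ik}\otimes u_{kj}$ and applying $\a_p\otimes \a_p$, I get a double sum $\sum_k z^{p(i)}\delta_{ik}\otimes z^{p(k)}\delta_{kj}$, which collapses to $\delta_{ij}\, z^{p(i)}\otimes z^{p(i)}$ because both Kronecker deltas force $k=i=j$. On the other side, $\Delta_{\C[z,z^{-1}]}(\a_p(u_{ij})) = \delta_{ij}\Delta_{\C[z,z^{-1}]}(z^{p(i)}) = \delta_{ij}\, z^{p(i)}\otimes z^{p(i)}$, so the two agree. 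The counit condition is immediate: $\epsilon_{\C[z,z^{-1}]}(\a_p(u_{ij})) = \delta_{ij}\epsilon_{\C[z,z^{-1}]}(z^{p(i)}) = \delta_{ij} = \epsilon(u_{ij})$.

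To pass from generators to the whole Hopf algebra, I would invoke the previous lemma, which ensures $\a_p$ is a well-defined algebra homomorphism (this is where the constraint $\sum_k p(k)=0$ enters, so that the quantum determinant relation is preserved). Since both $\Delta$ and $\Delta_{\C[z,z^{-1}]}\circ \a_p$, as well as $(\a_p\otimes\a_p)\circ\Delta$, are algebra homomorphisms on $\A(SU_q(n))$, and they agree on the generators $u_{ij}$, they agree everywhere. The same extension argument works for the counit.

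There is no serious obstacle here: the only subtlety is the bookkeeping of the two Kronecker deltas in the image of the coproduct, which only conspire to give a nonzero term when $i=j=k$. The assumption $\sum_k p(k)=0$ is not used directly in the coproduct computation but is essential upstream for $\a_p$ to descend to $\A(SU_q(n))$; without it, the whole statement would be vacuous.
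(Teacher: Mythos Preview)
Your proof is correct and follows essentially the same route as the paper: both verify the coproduct compatibility on the generators $u_{ij}$ by direct computation using the Kronecker deltas, obtaining $\delta_{ij}\,z^{p(i)}\otimes z^{p(j)}$ on each side (which is your $\delta_{ij}\,z^{p(i)}\otimes z^{p(i)}$ once $i=j$). Your treatment is in fact slightly more complete than the paper's, since you also check the counit condition and make the extension-to-the-whole-algebra argument explicit.
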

\begin{proof}
We calculate
$$(\a_p\otimes \a_p)\Delta(u_{ij})=(\a_p\otimes\a_p)\left(\sum\limits_{k} u_{ik}\otimes u_{kj}\right)=
\sum\limits_{k} z^{p(i)} \delta_{ik}\otimes z^{p(j)}\delta_{jk}=z^{p(i)}\otimes z^{p(j)}\delta_{ij}.$$
On the other hand we have 
$$\Delta_{\mathbb{C}[z,z^{-1}]}\left(\a_p(u_{ij})\right)=\Delta_{\mathbb{C}[z,z^{-1}]}\left(z^{p(i)}\delta_{ij}\right)=z^{p(i)}\otimes z^{p(j)}\delta_{ij}.$$

Moreover, 
$$(\a_p \otimes \a_p)\Delta(1)=1\otimes 1=\Delta_{\mathbb{C}[z,z^{-1}]}\left(\a_p(1)\right).$$
\end{proof}

\begin{lem}
$\a_p$ is a Hopf algebra morphism.
\end{lem}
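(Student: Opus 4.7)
The plan is to combine the two preceding lemmas with a standard uniqueness argument for antipodes. The previous lemma established that $\a_p$ is a $\ast$-algebra homomorphism, and the lemma just before this one showed it is a coalgebra morphism; together these make $\a_p$ a bialgebra morphism from $\A(SU_q(n))$ to $\C[z,z^{-1}]$. To promote it to a Hopf algebra morphism, one only needs to verify the single additional identity $\a_p \circ S = S_{\C[z,z^{-1}]} \circ \a_p$.

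First I would invoke the general fact that any bialgebra morphism between Hopf algebras automatically intertwines the antipodes. The argument is the standard one: in the convolution algebra $\mathrm{Hom}(\A(SU_q(n)), \C[z,z^{-1}])$, both $\a_p \circ S$ and $S_{\C[z,z^{-1}]} \circ \a_p$ serve as convolution inverses of $\a_p$, and by uniqueness of the inverse they must coincide. This step is essentially automatic once the bialgebra morphism property is in hand, and so no new computation is really required.

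For completeness, and since the algebra is generated by the matrix elements $u_{ij}$, I would also present the direct check on generators. Using $S(u_{ij}) = (u_{ji})^\ast$ together with the fact that $\a_p$ is a $\ast$-morphism, one computes
\[
\a_p(S(u_{ij})) = \a_p(u_{ji})^\ast = (z^{p(j)}\delta_{ji})^\ast = z^{-p(j)}\delta_{ij},
\]
while on the other side
\[
S_{\C[z,z^{-1}]}(\a_p(u_{ij})) = S_{\C[z,z^{-1}]}(z^{p(i)}\delta_{ij}) = z^{-p(i)}\delta_{ij}.
\]
The Kronecker delta forces $i=j$ in both expressions, so both reduce to $z^{-p(i)}\delta_{ij}$, and agreement on the generating set extends to all of $\A(SU_q(n))$ by multiplicativity.

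There is essentially no obstacle here; the only mild point to watch is that one should not try to verify the antipode identity on the abstract elements $(u_{ji})^\ast$ written out via the explicit $(n-1)$-fold product formula, since that would be a messy calculation. The cleaner route is either the abstract convolution-inverse argument or the one-line check on the diagonal generators as above.
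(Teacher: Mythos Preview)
Your proof is correct and your direct generator check coincides with the paper's own argument: the paper computes $\a_p(u_{ji}^{\ast})=z^{-p(j)}\delta_{ij}$ (there via the explicit $\ast$-structure formula rather than via the $\ast$-morphism property you invoke, but the outcome is identical) and compares it to $S_{\C[z,z^{-1}]}(\a_p(u_{ij}))=z^{-p(i)}\delta_{ij}$, noting that the Kronecker delta forces equality. The genuine addition in your write-up is the convolution-inverse argument: the paper does not appeal to the general fact that bialgebra maps between Hopf algebras automatically intertwine antipodes, but this abstract route is cleaner and sidesteps any worry about extending from the generators $u_{ij}$ to the full algebra (which, strictly speaking, also requires checking $u_{ij}^{\ast}$ since these are only $\ast$-algebra generators). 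Your final remark about avoiding the explicit $(n-1)$-fold product formula for $u_{ij}^{\ast}$ is well taken; the paper's one-line claim that $\a_p(u_{ij}^{\ast})=z^{-p(i)}\delta_{ij}$ via that formula is exactly the computation you are warning against, though it does go through once one observes that every monomial in the sum contains an off-diagonal factor unless $i=j$.
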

\begin{proof}
Simple calculation, using definition of $\ast$-structure, gives us $\a_p(u_{ij}^{\ast})=z^{-p(i)}\delta_{ij}$. 
Hence 
$$(\a_p\circ S)(u_{ij})=\a_p(u_{ji}^{\ast})=z^{-p(j)}\delta_{ij}$$
but on the other hand we have 
$$\left(S_{\mathbb{C}[z,z^{-1}]}\circ \a_p \right)(u_{ij})=S_{\mathbb{C}[z,z^{-1}]}\left(z^{p(i)}\delta_{ij}\right)=z^{-p(i)}\delta_{ij}.$$
\end{proof}
Let us now calculate degrees of generators.
$$\Delta_L(u_{ij})=(\a_p\otimes \mathrm{id})\left(\sum\limits_{k}u_{ik}\otimes u_{kj}\right)=\sum\limits_{k}\delta_{ik}z^{p(i)}\otimes u_{kj}=z^{p(i)}\otimes u_{ij}.$$
Similarly $\Delta_R(u_{ij})=u_{ij}\otimes z^{p(j)}$.

Hence $\mu(u_{ij})=p(i),\ \nu(u_{ij})=p(j)$ and
$$\delta(u_{ij})=p(i)-p(j).$$
Therefore, by the general construction we obtain
\begin{cor}
The algebra $\A_\phi(SU_q(n))$ is a braided Hopf algebra with the product:
$$ u_{ij}\ast u_{kl} = e^{i\phi\left(p(i)p(l)-p(k)p(j)\right)} u_{ij}\cdot u_{kl}, $$
the coproduct:
$$ \Delta_\phi(u_{ij})=\sum\limits_{k}e^{i\phi\left(p(i)-p(k)\right)\left(p(k)-p(j)\right)}
u_{ik}\otimes u_{kj}, $$
the braiding:
$$ \Psi(u_{ij}\otimes u_{kl})=e^{2 i\phi\left(p(i)-p(j)\right) \left(p(k)-p(l)\right)}
u_{kl}\otimes u_{ij},$$
the same counit and with the antipode:
$$ S_\phi(u_{ij})=e^{i\phi\left(p(i)-p(j)\right)^2}S(u_{ij}). $$
\end{cor}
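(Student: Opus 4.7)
The plan is to recognize this corollary as a pure substitution into the general machinery built up in the previous section, so the proof proposal is essentially a bookkeeping exercise. The three preceding lemmas have already verified that $\a_p$ is a Hopf algebra morphism from $\A(SU_q(n))$ to $\C[z,z^{-1}]$, so the hypotheses of Theorems \ref{coass}, \ref{morph}, \ref{braidedHopf} and \ref{antithm} are met. Before quoting those theorems I would also note (this is standard but deserves a line) that $\A(SU_q(n))$ admits a countable basis of homogeneous elements, for example the PBW-type monomials in the $u_{ij}$; combined with Lemma \ref{add} this gives the basis of bi-homogeneous elements required by the framework.

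Next I would record the bi-degrees of the generators, already computed in the paragraph preceding the corollary: from $\cop_L(u_{ij}) = z^{p(i)}\ox u_{ij}$ and $\cop_R(u_{ij}) = u_{ij}\ox z^{p(j)}$ we read off $\mu(u_{ij}) = p(i)$, $\nu(u_{ij}) = p(j)$, hence $\delta(u_{ij}) = p(i)-p(j)$. With these values in hand every clause of the corollary is an instance of a formula already proved: the product follows from Lemma \ref{algebra} with $\Phi(u_{ij},u_{kl}) = e^{i\phi(p(i)p(l)-p(j)p(k))}$; the coproduct follows from (\ref{coprod}) applied to $\Delta(u_{ij})=\sum_k u_{ik}\ox u_{kj}$, where each summand is homogeneous with $\delta(u_{ik})=p(i)-p(k)$ and $\delta(u_{kj})=p(k)-p(j)$; the braiding follows from (\ref{braid}); the antipode follows from Theorem \ref{antithm}; and the counit is untouched by the twist.

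I do not expect any genuine obstacle. The only point where one must be slightly careful is the verification that $\a_p$ is compatible with the antipode, since the antipode of $\A(SU_q(n))$ is given via the quantum determinant formula for $u_{ij}^\ast$; but that is precisely what the preceding lemma handles, using $\sum_k p(k)=0$ to kill all off-diagonal contributions. Once that is granted, the corollary is, as stated, an immediate consequence of the general construction, and no new computation of cocycle identities, coassociativity, or antipode axioms is required here.
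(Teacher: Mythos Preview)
Your proposal is correct and matches the paper's approach exactly: the paper offers no separate proof of this corollary, introducing it with ``Therefore, by the general construction we obtain,'' i.e.\ it too treats it as a direct substitution of the computed degrees $\mu(u_{ij})=p(i)$, $\nu(u_{ij})=p(j)$, $\delta(u_{ij})=p(i)-p(j)$ into the general formulas of Lemmas~\ref{algebra}, \ref{cocycle} and Theorems~\ref{coass}, \ref{braidedHopf}, \ref{antithm}. Your extra remark about the existence of a countable homogeneous basis is a small but legitimate addition the paper leaves implicit.
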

\subsection{The $\A(SU_q(2))$ Hopf algebra.}
\ \newline
Here we discuss in details the case of the braided Hopf algebra $\mathcal{A}(SU_q(2))$
and its explicit presentation. The results could be directly compared with the
presentation in \cite{KMRW14}.

Let $q$ be a real number with $0 < q < 1$, and let $\A = \A(SU_q(2))$ be the $*$-algebra 
generated by $a$ and $b$, subject to the following commutation rules:
\begin{align}
& a b = q b a,  \qquad  a b^*= q b^* a, \qquad bb^* = b^*b,  \\
& a^*a +  b^*b = 1,  \qquad  aa^* + q^2 b^*b = 1.
\label{eq:suq2-relns}
\end{align}
As a consequence, $a^*b = q^{-1} ba^*$ and $a^*b^* = q^{-1} b^* a^*$. 
This becomes a Hopf $*$-algebra under the coproduct
\begin{align*}
\cop a &:= a \ox a - q\,b^* \ox b,
\\
\cop b &:= b \ox a + a^* \ox b, 
\end{align*}
counit $\epsilon(a) = 1$, $\epsilon(b) = 0$, and the antipode 
$$ Sa = a^*,\quad Sb = - qb, \quad Sb^* = - q^{-1}b^*, \quad Sa^* = a.$$

Let $\a: \A \to C(S^1)$ be the following map:
$$ \a(a) = z, \quad \quad \a(b) = 0. $$
It is an Hopf algebra homomorphism from $\a$ to $\C[z,z^{-1}]$, so we have:

\begin{lem}
The following establishes the grading of homogeneous elements as defined in the previous
section:
$$ \mu(a) = 1 = \nu(a), \qquad \qquad \mu(b) = -1 = -\nu(b). $$ 
\end{lem}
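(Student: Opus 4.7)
The plan is to verify the claimed degrees on the two generators by direct computation, then invoke additivity to obtain a full homogeneous basis. First I would record the auxiliary values $\a(a^\ast)=z^{-1}$ and $\a(b^\ast)=0$: these follow immediately from $\a$ being a Hopf $\ast$-morphism (so $\a\circ \ast = \ast\circ\a$ and $\a\circ S = S_{\C[z,z^{-1}]}\circ\a$), together with $\a(a)=z$, $\a(b)=0$, and $z^\ast = z^{-1}$ in $\C[z,z^{-1}]$.

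Next I would apply $\cop_L=(\a\ox\id)\cop$ and $\cop_R=(\id\ox\a)\cop$ to $a$ and $b$ using the explicit coproducts $\cop a = a\ox a - q\, b^\ast\ox b$ and $\cop b = b\ox a + a^\ast\ox b$. In each case exactly one of the two summands is annihilated by $\a$ on the relevant tensor factor. For $a$ the term $b^\ast\ox b$ is killed both by $\a\ox\id$ and by $\id\ox\a$, leaving $\cop_L(a)=z\ox a$ and $\cop_R(a)=a\ox z$, so $\mu(a)=\nu(a)=1$. For $b$ the summand $b\ox a$ is killed by $\a\ox\id$ while $a^\ast\ox b$ is killed by $\id\ox\a$; what remains is $\cop_L(b)=z^{-1}\ox b$ and $\cop_R(b)=b\ox z$, giving $\mu(b)=-1$ and $\nu(b)=1$.

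Finally, by Lemma \ref{add} the maps $\mu$ and $\nu$ are additive on products, so every ordered monomial in $a,a^\ast,b,b^\ast$ is automatically homogeneous, and the standard PBW-type basis of $\A(SU_q(2))$ is a countable homogeneous basis of the form required by the general twisting construction. There is no substantive obstacle: the only point worth flagging is the need to invoke $\a(b^\ast)=0$, without which the vanishing of the cross-terms in $\cop a$ and $\cop b$ would not be immediate.
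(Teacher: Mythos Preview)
Your computation is correct and is precisely the natural verification; the paper in fact does not supply a proof of this lemma at all, merely stating it as an immediate consequence of the definition of the map $\a$. Your extra remark about the existence of a countable homogeneous basis via the PBW monomials and additivity (Lemma \ref{add}) is a useful addition that the paper leaves implicit.
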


\begin{thm}
Let $0 \leq \phi < 2 \pi$. The deformed $\ast$-algebra $\A_\phi(SU_q(2))$
is isomorphic to the algebra with the following generators and relations:
\begin{align}
&\alpha^* \alpha +\gamma^* \gamma= 1, \qquad
\alpha\alpha^*+ q^2 \gamma^* \gamma =1, \\
&\gamma\gamma^*=\gamma^*\gamma \\
&\alpha \gamma = q e^{4 i\phi}\gamma  \alpha , \qquad
\alpha \gamma^*  = q e^{-4 i \phi} \gamma^* \alpha .
\end{align}
\end{thm}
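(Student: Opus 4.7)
The plan is to construct an explicit $\ast$-algebra isomorphism from the abstractly presented algebra $B$ (generated by $\alpha,\gamma$ with the listed relations) onto $\A_\phi(SU_q(2))$ by sending $\alpha\mapsto a$ and $\gamma\mapsto b$, where the target carries the twisted product $m_\ast$ of Lemma \ref{algebra}. Everything reduces to evaluating the cocycle $\Phi$ of Lemma \ref{cocycle} on pairs of generators.

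First I would pin down the bigrading. From $\cop a=a\otimes a-qb^\ast\otimes b$ and $\cop b=b\otimes a+a^\ast\otimes b$, applying $\chi$ to one tensor slot immediately gives $\mu(a)=\nu(a)=1$, $\mu(b)=-1$, $\nu(b)=1$. Since $\chi$ and $\cop$ are both $\ast$-morphisms, the left coaction $\cop_L$ is a $\ast$-morphism as well, so $\cop_L(x^\ast)=\cop_L(x)^{\ast\otimes\ast}$, which gives $\mu(x^\ast)=-\mu(x)$ and similarly $\nu(x^\ast)=-\nu(x)$. Hence $\mu(a^\ast)=\nu(a^\ast)=-1$, $\mu(b^\ast)=1$, $\nu(b^\ast)=-1$. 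Plugging these into $\Phi(x,y)=\exp\bigl(i\phi(\mu(x)\nu(y)-\nu(x)\mu(y))\bigr)$ yields, on generator pairs, $\Phi(a,b)=e^{2i\phi}$, $\Phi(b,a)=e^{-2i\phi}$, $\Phi(a,b^\ast)=e^{-2i\phi}$, $\Phi(b^\ast,a)=e^{2i\phi}$, whereas $\Phi(b,b^\ast)=\Phi(b^\ast,b)=1$ and every phase involving only $\{a,a^\ast\}$ is trivial.

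Substituting these factors into the original relations $ab=qba$, $ab^\ast=qb^\ast a$, $bb^\ast=b^\ast b$, $a^\ast a+b^\ast b=1$, $aa^\ast+q^2 b^\ast b=1$ then recovers the presentation of $B$ directly: for instance $\alpha\ast\gamma=e^{2i\phi}(ab)=e^{2i\phi}q(ba)=qe^{4i\phi}(\gamma\ast\alpha)$, and similarly $\alpha\ast\gamma^\ast=qe^{-4i\phi}(\gamma^\ast\ast\alpha)$, whereas the sphere relations $\alpha^\ast\ast\alpha+\gamma^\ast\ast\gamma=1$, $\alpha\ast\alpha^\ast+q^2\gamma^\ast\ast\gamma=1$, and $\gamma\ast\gamma^\ast=\gamma^\ast\ast\gamma$ hold without any phase correction. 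The original involution of $H$ remains antimultiplicative for $m_\ast$ by virtue of the identity $\Phi(y^\ast,x^\ast)=\overline{\Phi(x,y)}$, which follows from $\mu(x^\ast)=-\mu(x)$, $\nu(x^\ast)=-\nu(x)$.

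Surjectivity of the resulting map $B\to\A_\phi(SU_q(2))$ is clear since $a,b$ generate the target as a $\ast$-algebra. For injectivity I would invoke the twisting-and-untwisting subsection: the $\ast$-product differs from the original product only by unit scalars on homogeneous basis elements, so the PBW basis $\{(b^\ast)^k b^l a^m:k,l\in\Z_{\geq 0},\,m\in\Z\}$ of $\A(SU_q(2))$ remains a linear basis of $\A_\phi(SU_q(2))$, and an analogous normal-ordered basis for $B$ is produced from its own commutation relations; the map matches these bases up to unit phases. The main delicacy is the conjugation rule for $\mu,\nu$ on $a^\ast, b^\ast$ and the observation that both sphere relations survive intact; once these are in place the remaining computations are purely arithmetic.
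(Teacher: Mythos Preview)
Your approach is essentially the same as the paper's: you compute the cocycle $\Phi$ on pairs of generators, substitute into the defining relations of $\A(SU_q(2))$, and read off the twisted relations under the identification $\alpha\mapsto a$, $\gamma\mapsto b$. You are in fact more careful than the paper in justifying the $\ast$-compatibility and the bijectivity of the map (via the PBW basis), which the paper leaves implicit.
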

\begin{proof}
We compute:
$$
\begin{aligned}
& a \ast a^* = a a^*, \qquad a^* \ast a = a^* a, \\
& b \ast b^* = bb^*, \qquad  b^* \ast b = b^*b, \\
& a \ast b =  e^{2 i \phi} ab, \qquad 
  a \ast b^* = e^{-2 i \phi} b^* \ast a, \\
& b \ast a =  e^{-2 i \phi} ba, \qquad 
  b^* \ast a = e^{2 i \phi} a \ast b^*.
\end{aligned}
$$
If $\rho$ is the identity map $\rho: H \to H_\phi$, setting $\rho(a) = \alpha$ and 
$\rho(b) = \gamma$ we obtain from the relations \ref{eq:suq2-relns} the relations 
of $\A_\phi(SU_q(2))$.
\end{proof}
For $\A(SU_q(2))$ we have:
$$ \delta(a) = 0 = \delta(a^*), \qquad \delta(b^*) = 2 = - \delta(b).$$
Next, we can see how the coproduct is changed for $\A_\phi(SU_q(2))$:
\begin{align}
&\Delta_\phi(\alpha)=\alpha \otimes \alpha - 
 q e^{-4i\phi} \gamma^*\otimes \gamma\\
&\Delta_\phi(\alpha^*)=\alpha^*\otimes \alpha^* - 
 q e^{-4i \phi} \gamma \otimes \gamma^*\\
&\Delta_\phi(\gamma)=\gamma\otimes\alpha +\alpha^*\otimes \gamma \\
&\Delta_\phi(\gamma^*)=\gamma^*\otimes \alpha^*+\alpha\otimes \gamma^*
\end{align}

In the $\A(SU_q(2))$ case the only nontrivial braiding phase factors $\psi$ between the generators $\alpha, \gamma$ are:

$$ \psi(\gamma, \gamma) = e^{8 i \phi} = \psi(\gamma^*, \gamma^*), \qquad
   \psi(\gamma^*, \gamma) = e^{-8 i \phi} = \psi(\gamma, \gamma^*).
$$ 
We compute the antipode on the the generators of $\A(SU_q(2)$: 
\begin{align}
&S_{\phi}(\alpha)=\alpha^*, \qquad &S_{\phi}(\alpha^*)=\alpha \\
&S_{\phi}(\gamma)= - q e^{ 4i\phi} \gamma, &S_{\phi}(\gamma^*)= - q^{-1} e^{ 4i\phi}  \gamma^*. 
\end{align}

\subsection{The quantum double torus.}
\ \newline

Though the noncommutative torus is not a Hopf algebra, there exists a Hopf algebra structure over 
a direct sum of the commutative and noncommutative torus described by Hajac and Masuda 
\cite{HM98}.

Let $\mathcal{A}=C(\mathbb{T}^2)\oplus C(\mathbb{T}_q^2)$ with generators $u,v$ 
and $U,V$ of $C(\mathbb{T}^2)$ and $C(\mathbb{T}^2_q)$, respectively. 
With the following coproduct:
$$
\begin{aligned}
\Delta(u)=u\otimes u+V\otimes U, \qquad & 
\Delta(v)=v\otimes v+U\otimes V, \\
\Delta(U)=U\otimes u +v\otimes U, \qquad &
\Delta(V)=V\otimes v +u\otimes V,
\end{aligned}
$$
counit
$$
\epsilon(u)=\epsilon(v)=1, \qquad 
\epsilon(U)=\epsilon(V)=0
$$
and the antipode:
$$ 
S(u)=u^{\ast}, \quad S(v)=v^*,\quad S(U)=V^{\ast}, \quad S(V)=U^{\ast},
$$ 
$\mathcal{A}$ is a Hopf algebra called a quantum double torus.

\begin{lem}
Let us define $\a:\mathcal{A}\rightarrow \mathbb{C}[z,z^{-1}]$ by 
$$ \a(u)=z,\quad \a(v)=z^{-1}, \quad \a(U)=\a(V)=0.$$ 
Then $\a$ is a Hopf algebra map.

Using the general construction we obtain a braided Hopf algebra 
$\mathcal{A}_{\phi}$ with the following properties:
\begin{enumerate}
\item as algebras $\mathcal{A}\cong \mathcal{A}_\phi$,
\item the coproduct on $\mathcal{A}_\phi$ is as follows:
\begin{align}
&\Delta_\phi(x) = \Delta(x), \quad x \in C(\mathbb{T}_q^2), \\
&\Delta_\phi(u) = u\otimes u +e^{-4i\phi}V\otimes U, \\
&\Delta_{\phi}(v)=v\otimes v +e^{-4i\phi}U\otimes V, 
\end{align}
\item the only non-trivial braiding is on the noncommutative torus alone,
using the shorthand $\Psi(x \ox y) = \psi(x,y) y \ox x$:
\begin{align*}
& \psi(U,U)=\psi(V,V)=\psi(U^*,U^*)=\psi(V^*,V^*)=e^{8i\phi}, \\
& \psi(U,V)=\psi(V,U)=\psi(U,U^*)=\psi(U^*,U)=\psi(V,V^*)=\psi(V^*,V)= e^{-8i\phi}, \\
& \psi(U,V^*)=\psi(V^*,U)=\psi(U^*,V)=\psi(V,U^*)= e^{8i\phi}. 
\end{align*}
\end{enumerate}
\end{lem}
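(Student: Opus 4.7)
The plan has three stages: verify that $\a$ is a Hopf $\ast$-algebra map, read off the bigrading $(\mu,\nu)$ on the four generators, and finally substitute into the general formulas of Section 2. Nothing in what follows is conceptually difficult; the main bookkeeping concerns the signs of $\delta$ on the $\ast$-generators, which is where I would expect to need the most care.

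For the first stage, I would check multiplicativity on generators. Since $\mathcal{A}=C(\mathbb{T}^2)\oplus C(\mathbb{T}^2_q)$ is a direct sum, crossed products vanish, so multiplicativity reduces to the two summands separately: on the commutative side one needs only $\a(uv)=zz^{-1}=1=\a(vu)$, and on the noncommutative side every relation maps under $\a$ to an identity between $0$'s. Coalgebra compatibility is checked on generators as $(\a\ox\a)\Delta=\Delta_{\C[z,z^{-1}]}\circ\a$: for example, $(\a\ox\a)\Delta(u)=z\ox z+0=\Delta(z)$ and symmetrically for $v$, while both sides vanish on $U$ and $V$. Compatibility with counit and antipode is routine.

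Applying $\Delta_L=(\a\ox\id)\Delta$ and $\Delta_R=(\id\ox\a)\Delta$ to each generator of $\mathcal{A}$ reads off $(\mu,\nu)(u)=(1,1)$, $(\mu,\nu)(v)=(-1,-1)$, $(\mu,\nu)(U)=(-1,1)$, and $(\mu,\nu)(V)=(1,-1)$. Hence $\delta(u)=\delta(v)=0$ while $\delta(U)=-2$ and $\delta(V)=2$, with the $\ast$-generators having $\delta(U^*)=2$ and $\delta(V^*)=-2$ by a direct computation from $\Delta(U^*)=U^*\ox u^*+v^*\ox U^*$ together with $\a(u^*)=z^{-1}$. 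The three claims then follow by substitution. For (1), the phase $\Phi(x,y)=e^{i\phi(\mu(x)\nu(y)-\nu(x)\mu(y))}$ is trivial on every pair of generators: on $\{u,v\}$ because $\mu=\nu$, on any pair within $C(\mathbb{T}^2_q)$ by direct check (e.g.\ $\mu(U)\nu(V)=\nu(U)\mu(V)=-1$), and on mixed pairs the classical product already vanishes in the direct sum. For (2), substituting into $\Delta_\phi(x)=e^{i\phi\delta(x_{(1)})\delta(x_{(2)})}x_{(1)}\ox x_{(2)}$ produces no modification on the noncommutative summand, since one factor in each term of $\Delta(U)$ and $\Delta(V)$ has $\delta=0$, while the mixed terms of $\Delta(u),\Delta(v)$ involve $\delta$-product $\pm 4$, yielding the claimed $e^{-4i\phi}$. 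For (3), the formula $\Psi(x\ox y)=e^{2i\phi\delta(x)\delta(y)}y\ox x$ is nontrivial precisely when both $\delta$'s are nonzero, which forces the pair to come from $C(\mathbb{T}^2_q)$, and the sign patterns from Stage~2 produce the listed $e^{\pm 8i\phi}$.
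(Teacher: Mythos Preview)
Your proposal is correct and complete. The paper states this lemma without proof, leaving the verification implicit; your argument supplies precisely the routine check one would expect---compute $(\mu,\nu)$ on the generators from $\Delta_L,\Delta_R$, then substitute into the general formulas of Section~2---and all of your numerical values match the statement.
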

So we can see that the same algebra has a family of coproducts
each in a different braided category, which make it a braided Hopf algebra.
\section{Bicovariant differential calculi}

Recall that a left action of a (braided) Hopf algebra $H$ on vector space $\Gamma$ is a linear map $\rhd:H\otimes \Gamma\rightarrow \Gamma$ s.th. $(hg)\rhd x=h\rhd(g\rhd x)$ and $1\rhd x=x$ for all $g,h\in H, x\in \Gamma$. Similarly we define right action. Then we say that $\Gamma$ is a left (resp. right) $H$-module. 

 A left coaction of $H$ on $\Gamma$ is a linear map $\delta_L: \Gamma\rightarrow H\otimes \Gamma$ satisfying

$$ \left(\Delta\otimes \mathrm{id}\right)\circ 
\delta_L= \left(\mathrm{id}\otimes \delta_L\right)\circ\delta_L,$$ 
and 

$$\left(\epsilon\otimes \mathrm{id}\right)\delta_L=\mathrm{id}.$$

Analogously we define the right one. We shall say $\Gamma$ is a left (resp. right) comodule and very often we shall use Sweedler's notation:
$$ \delta_L (x) = x_{(-1)} \otimes x_{(0)}, \qquad  \delta_R (x) = x_{(0)} \otimes x_{(1)}. $$
 
If $\Gamma$ is both left and right $H$-module, with the actions that are commutative with each other:
$$ (h \rhd x) \lhd g = h \rhd (x \lhd g),$$
then we call $H$-bimodule. We define $H$-bicomodule in  a similar way.
  
\begin{defn}
We say that $\Gamma$ is a left covariant bimodule if it is a left $H$-bimodule which is a left comodule over $H$ with coaction $\delta_L$ such that for any $a,b\in H, \ \rho\in \Gamma$ the following are satisfied
$$
\begin{aligned}
&&\delta_L(a\,\rho)=\Delta(a) \delta_L(\rho), \qquad
&\delta_L(\rho \, a)=\delta_L(\rho)\Delta(a).
\end{aligned}
$$
\end{defn}
In the above relations we used shorter notation:
$x \lhd \rho = x\, \rho$ and $\rho \rhd x = \rho \, x$. The multiplication $(x\ox y)(z\ox \rho)$ for $x,y,z\in H,\ \rho\in \Gamma$ is defined by:
$$(x \ox y)(z\ox \rho)=(m\ox \rhd)(x\ox \tilde\Psi(y\ox z)\ox \rho),$$
where $\tilde{\Psi}$ is the braiding of the braided Hopf algebra $H$. 
Similarly for the right module structure:
$$(z \ox \rho) (x \ox y) = (m \ox \lhd)(z \ox \hat\Psi(\rho \ox x)\ox y),$$
where $\hat{\Psi}: \Gamma \otimes H \to H \otimes \Gamma$ is 
braiding between $H$ and $\Gamma$. 

Similarly we define right covariant bimodule with compatibility conditions
$$
\begin{aligned}
&&\delta_R(a\,\rho)=\Delta(a) \delta_R(\rho), \qquad
&\delta_R(\rho \, a)=\delta_R(\rho)\Delta(a).
\end{aligned}
$$

Note, that the above conditions require that $\Gamma \ox H$ and $H \ox \Gamma$
have a bimodule structure over $H \ox H$. This is obvious in the case $H$ is not
braided, however, if $H$ is a braided Hopf algebra it requires an introduction of the braiding between $\Gamma$ and $H$.

\begin{defn}
We say that $\Gamma$ is a bicovariant bimodule or Hopf bimodule over $H$ if it is both $H$-bicomodule, left and right covariant bimodule.
\end{defn}

Let us recall the definition of left and right covariant differential calculus for
a braided Hopf algebra. Let $H$ be a braided Hopf algebra and $(\Gamma, d)$ be the
first order differential calculus (FODC), that is $\Gamma$ is a Hopf bimodule over 
$H$ and $d: H \to \Gamma$ satisfies the Leibniz rule and is surjective in the sense 
that every element of $\Gamma$ is of the form $\sum a_k\rhd db_k$ for some 
$a_k,b_k\in H$.

\begin{defn} We say that the first order differential calculus $(\Gamma, d)$ 
	over a braided Hopf
	algebra $H$ is  {\em braided left covariant} if $\Gamma$ is a Hopf bimodule over $H$ and
	$$ \delta_L (d h) = (\id \otimes d) \Delta(h), \;\;\; \forall h \in H. $$
\end{defn}

In a similar way we define the braided right covariant calculus. We say that the
calculus is braided bicovariant if it is left and right covariant.

\subsection{The universal differential calculus}

Let $H$ be a braided Hopf algebra with the braiding $\Psi$ 
and $\Gamma_u = \hbox{ker}\, m \subset H \otimes H$ 
be the Hopf bimodule of the universal first order differential 
calculus, i.e. with $da=a\ox 1-1\ox a$.

\begin{lem}
The following maps,
$$
\begin{aligned}
&&\delta_L: \Gamma_u \rightarrow H \otimes \Gamma_u, \qquad
&\delta_L(x \otimes y) = (m \otimes \id \otimes \id)  (x_{(1)} \otimes 
\Psi( x_{(2)} \otimes  y_{(1)}) \otimes y_{(2)}), \\
&&\delta_R: \Gamma_u \rightarrow \Gamma_u \otimes H, \qquad
&\delta_R(x \otimes y) =  (\id \otimes \id \otimes\, m) (x_{(1)} \otimes \Psi( x_{(2)} \otimes  y_{(1)}) \otimes y_{(2)}),
\end{aligned}
$$
make $(\Gamma, D)$ a bicovariant FODC.
\end{lem}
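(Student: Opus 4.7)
My plan is to view $\Gamma_u = \ker m$ as a sub-object of $H \ox H$ inside the braided category and to verify the bicovariance axioms by leveraging the braided bialgebra structure of $H$ together with the hexagon and naturality of $\Psi$. The formulas for $\delta_L$ and $\delta_R$ are the braided analogs of the classical coactions $a \ox b \mapsto a_{(1)} b_{(1)} \ox a_{(2)} \ox b_{(2)}$, with $\Psi$ inserted in order to transpose $x_{(2)}$ past $y_{(1)}$ before the multiplication is performed.

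First I would check well-definedness, namely that $\delta_L$ maps $\Gamma_u$ into $H \ox \Gamma_u$. Applying $\id \ox m$ to the last two legs of $\delta_L(x \ox y)$ collapses, by the braided multiplicativity of $\Delta$ (cf.\ Theorem \ref{morph}), to $\Delta(xy)$; hence if $x \ox y \in \ker m$ the last two legs automatically land in $\ker m$, and the argument for $\delta_R$ is symmetric. Next I would verify the two coaction axioms. Coassociativity $(\Delta \ox \id)\delta_L = (\id \ox \delta_L)\delta_L$ unfolds to an identity involving two applications of $\Delta$ and one of $\Psi$ on the left, versus one of $\Delta$ and two of $\Psi$ on the right, which is precisely the hexagon for $\Psi$ combined with coassociativity of $\Delta$. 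The counit axiom follows from $(\epsilon \ox \id)\Delta = \id$ and the triviality of $\Psi$ on the unit.

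Compatibility with $d$ is then immediate: substituting $y = 1$ or $x = 1$ collapses the $\Psi$ in the definition, giving $\delta_L(h \ox 1) = h_{(1)} \ox h_{(2)} \ox 1$ and $\delta_L(1 \ox h) = h_{(1)} \ox 1 \ox h_{(2)}$, whose difference is $h_{(1)} \ox d h_{(2)} = (\id \ox d) \Delta(h)$. The analogous identity for $\delta_R$ proceeds dually.

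The chief obstacle will be the bimodule compatibility $\delta_L(a \rho) = \Delta(a) \, \delta_L(\rho)$ and $\delta_L(\rho a) = \delta_L(\rho) \, \Delta(a)$, because the right-hand side uses the braided bimodule product $(x \ox y)(z \ox \rho) = (m \ox \rhd)(x \ox \tilde\Psi(y \ox z) \ox \rho)$ and thus introduces an extra copy of $\Psi$, while the left-hand side, when expanded through $\Delta(ax)$, brings in further instances of $\Psi$ via the braided multiplicativity of $\Delta$. Matching the two sides becomes a bookkeeping exercise in composing and reshuffling several $\Psi$'s using the hexagon and naturality until they stand in a canonical position; once that normalisation is performed, equality is forced by coassociativity and the braided bialgebra axioms. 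Finally, the bicomodule condition $(\id \ox \delta_R)\delta_L = (\delta_L \ox \id)\delta_R$ is verified in the same spirit: both sides reduce to a common five-fold tensor involving two applications of $\Delta$ and one of $\Psi$, and coincide after a single application of coassociativity.
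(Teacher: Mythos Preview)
Your proposal is correct and, in fact, considerably more thorough than the paper's own argument. The paper's proof establishes only the compatibility of $\delta_L$ and $\delta_R$ with $d$: it observes that $\Psi$ acts as the ordinary flip whenever one tensor leg is $1$, computes $\delta_L(a\ox 1)=a_{(1)}\ox a_{(2)}\ox 1$, $\delta_L(1\ox a)=a_{(1)}\ox 1\ox a_{(2)}$ (and the analogous identities for $\delta_R$), and subtracts to obtain $\delta_L(da)=(\id\ox d)\Delta(a)$ and $\delta_R(da)=(d\ox\id)\Delta(a)$. This is precisely your paragraph on ``compatibility with $d$''.

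Everything else you outline---well-definedness of $\delta_L,\delta_R$ as maps into $H\ox\Gamma_u$ and $\Gamma_u\ox H$, the coaction axioms, the bimodule compatibility $\delta_L(a\rho)=\Delta(a)\delta_L(\rho)$ etc., and the bicomodule condition---the paper simply leaves unstated. Your sketches for these are sound: the well-definedness via $(\id\ox m)\delta_L=\Delta\circ m$ uses exactly the braided multiplicativity of $\Delta$, and the remaining identities do reduce to hexagon/naturality bookkeeping as you say. So your route is not different in spirit, just more complete; the only part the paper actually writes out coincides with yours.
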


\begin{proof}
Using fact that if on one leg we have unity then $\Psi$ acts on such tensor product as usual flip, we obtain the following
$$
\begin{aligned}
&& \delta_L(a\ox 1)=a_{(1)}\ox a_{(2)}\ox 1 , \qquad & \delta_L(1\ox a)=a_{(1)}\ox 1\ox a_{(2)}, \\
&& \delta_R(a\otimes 1)=a_{(1)}\ox 1 \ox a_{(2)}, \qquad & \delta_R(1\ox a)=1\ox a_{(1)}\ox a_{(2)}.
\end{aligned}
$$
Hence we obtain 
$$
\begin{aligned}
&& \delta_L(da)=a_{(1)}\ox\left(a_{(2)}\otimes 1-1\otimes a_{(2)}\right)=(\id\ox d)\Delta(a),\\
&& \delta_R(da)=\left(a_{(1)}\ox 1 -1\ox a_{(1)}\right)\ox a_{(2)}=(d\ox\id)\Delta(a).
\end{aligned}
$$
\end{proof}

\subsection{Braided bicovariant calculi over twisted braided Hopf algebras.}

Let us assume now that we have a braided Hopf algebra $H_\phi$ constructed from 
a Hopf algebra $H$ (as stated in Theorem \ref{braidedHopf}) and let $(\Gamma, d)$ be 
a bicovariant first order differential calculus over $H$. 

Then, $\Gamma$ has a left and right coaction of $\C[z,z^{-1}]$ defined 
in a similar way as on $H$:

$$ \hat{\delta}_L( \omega) = (\a \ox \id) \delta_L (\omega), \qquad
\hat{\delta}_R( \omega) = (\id \ox \a) \delta_R (\omega).$$

We say that an element $\omega$ is homogeneous of degrees 
$\hat{\mu},\hat{\nu}$ iff:
$$ \hat{\delta}_L(\omega) = z^{\hat{\mu}(\omega)} \ox \omega, \qquad
 \hat{\delta}_R(\omega) = \omega \ox z^{\hat{\nu}(\omega)}.$$

We have:
$$ \hat{\mu}(dh) = \mu(h), \qquad \hat{\nu}(dh) = \nu(h), $$
$$ \hat{\mu}(x\, \omega) = \mu(x)+\hat{\mu}(\omega), \qquad \hat{\nu}(x\, \omega) = \nu(x) +\hat{\nu}(\omega), $$
$$ \hat{\mu}(\omega\, y) = \hat{\mu}(\omega)+{\mu}(y), \qquad \hat{\nu}(\omega\, y) = \hat{\nu}(x) +{\nu}(\omega). $$
Indeed, using fact that $\a $ is Hopf algebra morphisms we have
$$
\begin{aligned}
\left(\Delta\ox \id\right)\hat{\delta}_L &=\left(\Delta\circ\a \ox\id\right)\delta_L= \left(\a \ox\a \ox\id\right)\left(\Delta\ox\id\right)\delta_L= \left(\a \ox\a \ox\id\right)\left(\id\ox \delta_L\right)\delta_L= \\
&=\left(\id\ox\left(\a \ox\id\right)\delta_L\right)\left(\a \ox\id\right) \delta_L=\left(\id\ox\hat{\delta}_L\right)\hat{\delta}_L
\end{aligned}
$$
Similarly for the right one. \\
Next, suppose that $h$ has degrees $\mu(h),\nu(h)$, i.e.
$$\begin{aligned}
 \left(\a \ox\id\right)\Delta(h)=z^{\mu(h)}\ox h \qquad \mathrm{and} \qquad \left(\id\ox\a \right)\Delta(h)=h\ox z^{\nu(h)}.
 \end{aligned}$$
 Then 
 $$\begin{aligned}
 \hat{\delta}_L(dh)&=\left(\a \ox\id\right)\delta_L(dh)= \left(\a \ox\id\right)\left(\id\ox d\right)\Delta(h)= \left(\id\ox d\right)\left(\a \ox\id\right)\Delta(h)= \\ &=\left(\id\ox d\right)\left(z^{\mu(h)}\ox h \right)=z^{\mu(h)}\ox h.\end{aligned}$$
 Hence $\hat{\mu}(dh)=\mu(h)$ and similarly for the right degree, $\hat{\nu}(dh) = \nu(h)$. \\
 Now, left covariance of the Hopf bimodule $\Gamma$ means 
 $\delta_L(x\omega)=\Delta(x)\delta_L(\omega)$. It implies the following
 $$
 \begin{aligned}
  \hat{\delta}_L(x\omega)&=\left(m\ox\rhd\right)\left(\a \ox\tau\ox \id\right)\left(\Delta\ox\left(\a  \ox\id\right)\delta_L\right)\left( x\ox \omega\right)= \\ &=\left(m\ox\rhd\right)\left(\id\ox\tau\ox \id\right) \left(\left(\a \ox\id\right)\Delta \ox (\a \ox\id)\delta_L\right)\left(x\ox\omega\right)=
  \\ &=z^{\mu(x)+\hat{\mu}(\omega)}\ox x\omega. 
 \end{aligned}
 $$
 Analogously we can prove other equalities.

\begin{lem}\label{bimodb}
Taking  $\Gamma_\phi=\Gamma$ as a vector space, the following defines 
a left and right module structure on $\Gamma_\phi$ over $H_\phi$:
$$ 
\begin{aligned}
x \ast \omega = e^{i\phi  ( \mu(x) \hat{\nu}(\omega) - \nu(x) \hat{\mu}(\omega))} \, x \, \omega, \\
\omega \ast x  = e^{i\phi  ( \hat{\mu}(\omega)) \nu(x) - \hat{\nu}(\omega) \mu(x) } \, \omega \, x.
\end{aligned}
$$
\end{lem}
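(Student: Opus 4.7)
The plan is to verify the left- and right-module axioms — associativity and unitality — by a direct computation that mirrors the 2-cocycle manipulation of Lemma \ref{cocycle}, with $\omega$ playing the role of a homogeneous element carrying degrees $\hat{\mu}(\omega),\hat{\nu}(\omega)$.

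For the left action I would expand both $(x \ast y) \ast \omega$ and $x \ast (y \ast \omega)$ by unfolding the definition of $\ast$ given in the lemma together with the twisted product on $H_\phi$ from Lemma \ref{algebra}. In each case one is left with the undeformed expression $x\,y\,\omega$ multiplied by a single scalar exponential. Using the additivity $\mu(xy) = \mu(x)+\mu(y)$, $\nu(xy) = \nu(x)+\nu(y)$ from Lemma \ref{add}, together with the additivity $\hat{\mu}(y\omega) = \mu(y)+\hat{\mu}(\omega)$ and $\hat{\nu}(y\omega) = \nu(y)+\hat{\nu}(\omega)$ established just above the lemma, the two phases collapse to the common exponent
$$i\phi\bigl(\mu(x)\nu(y)-\nu(x)\mu(y) + \mu(x)\hat{\nu}(\omega)-\nu(x)\hat{\mu}(\omega) + \mu(y)\hat{\nu}(\omega)-\nu(y)\hat{\mu}(\omega)\bigr),$$
which gives associativity. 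Unitality $1 \ast \omega = \omega$ is immediate from $\mu(1)=\nu(1)=0$.

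The right action is handled symmetrically. Expanding $(\omega \ast x) \ast y$ and $\omega \ast (x \ast y)$, and substituting the analogous additivity relations $\hat{\mu}(\omega x) = \hat{\mu}(\omega)+\mu(x)$, $\hat{\nu}(\omega x) = \hat{\nu}(\omega)+\nu(x)$, produces the same phase on both sides. The unit axiom $\omega \ast 1 = \omega$ follows again from $\mu(1)=\nu(1)=0$.

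I do not foresee any genuine obstacle; the statement is really the observation that the 2-cocycle $\Phi$ of Lemma \ref{cocycle} extends to a deformation of the bimodule structure once the bigrading on $H$ has been prolonged to $\Gamma$ via $\hat{\mu},\hat{\nu}$. If desired, one could avoid the explicit bookkeeping altogether by applying Lemma \ref{cocycle} directly to the $\mathbb{Z}\oplus\mathbb{Z}$-graded vector space $H \oplus \Gamma$, deducing associativity in a single stroke from the cocycle identity already proven.
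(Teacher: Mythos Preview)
Your proposal is correct and follows essentially the same approach as the paper: both reduce the associativity of the twisted actions to a cocycle-type identity in the exponents, derived from the additivity of $\mu,\nu$ on $H$ and of $\hat\mu,\hat\nu$ on $\Gamma$. The paper records these identities explicitly (without expanding the common phase as you do) and concludes; your version is slightly more detailed in that it also spells out unitality and suggests the clean reformulation via the bigraded space $H\oplus\Gamma$, but the underlying argument is identical.
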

\begin{proof}
Using the properties of $\hat{\mu},\mu$ and $\hat{\nu},\nu$, we obtain
analogous cocycle conditions as for $\mu,\nu$ alone in Lemma \ref{cocycle}:
$$
\begin{aligned}
& \mu(x)\hat{\nu}(y\omega)-\nu(x)\hat{\mu}(y,\omega)+\mu(y)\hat{\nu}(\omega)-\nu(y)\hat{\mu}(\omega)=\\
&=\mu(xy)\hat{\nu}(\omega)-\nu(xy)\hat{\mu}(\omega)+\nu(x)\hat{\nu}(y)-\nu(x)\hat{\mu}(y), \\
& \hat{\mu}(\omega)\nu(xy)-\hat{\nu}(\omega)\mu(xy)+\mu(x)\nu(y)-\nu(x)\mu(y) =\\
& =\hat{\mu}(\omega x)\nu(y)-\hat{\nu}(\omega x)\mu(y)+\hat{\mu}(\omega)\nu(x)-\hat{\nu}(\omega)\mu(x).
\end{aligned}
$$
That implies that we obtain left and right module structure.
\end{proof}
\begin{lem}\label{bibim}
The following defines the left (right) coaction of $H_\phi$ on $\Gamma_\phi$ on 
homogeneous elements as:
$$ \delta_{L^\phi}(\omega)=  e^{i\phi {\delta}(\omega_{(-1)})\hat{\delta}(\omega_{(0)})}  
\omega_{(-1)} \ox \omega_{(0)},$$ 
and
$$ \delta_{R^\phi}(\omega)=  e^{i\phi  \hat{\delta}(\omega_{(0)})\delta(\omega_{(1)})}  
\omega_{(0)} \ox \omega_{(1)}.$$ 

with the braiding between the bimodule $\Gamma_\phi$ and $H_\phi$ defined in the
same way:
$$ 
\begin{aligned}
&\Psi(\omega \ox x) = e^{2 i\phi  \hat{\delta}(\omega)\delta(x)}  x \ox \omega, \\
&\Psi(x \ox \omega) = e^{2 i\phi  \hat{\delta}(\omega)\delta(x)}  \omega \ox x 
\end{aligned}
$$
\end{lem}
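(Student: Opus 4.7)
The plan is to verify the left coaction axioms for $\delta_{L^\phi}$ (the right case being dual) and to check that the braiding is compatible with the twisted bimodule structure of Lemma \ref{bimodb}, in strict parallel with Theorems \ref{coass} and \ref{morph}. The key preliminary is a coaction analog of Lemma \ref{degprop}.

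From the comodule axiom $(\Delta \otimes \id)\delta_L = (\id \otimes \delta_L)\delta_L$, together with the definitions of $\hat\mu$ and $\hat\nu$, one extracts for a homogeneous $\omega$ with $\delta_L(\omega) = \omega_{(-1)} \otimes \omega_{(0)}$ the identities $\mu(\omega_{(-1)}) = \hat\mu(\omega)$ and $\nu(\omega_{(-1)}) = \hat\mu(\omega_{(0)})$; dually from $\delta_R$ one gets $\nu(\omega_{(1)}) = \hat\nu(\omega)$ and $\mu(\omega_{(1)}) = \hat\nu(\omega_{(0)})$. Subtracting yields the additivity
\[
\hat\delta(\omega) \;=\; \delta(\omega_{(-1)}) + \hat\delta(\omega_{(0)}),
\]
the exact analog of (\ref{degad}) for the coaction, on which everything hinges.

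For the coaction axiom $(\Delta_\phi \otimes \id)\delta_{L^\phi}(\omega) = (\id \otimes \delta_{L^\phi})\delta_{L^\phi}(\omega)$, coassociativity of $\delta_L$ places both sides over the same triple tensor $\omega_{(-2)} \otimes \omega_{(-1)} \otimes \omega_{(0)}$. On the LHS the total phase is $\delta(\omega_{(-1)})\hat\delta(\omega_{(0)})$ from $\delta_{L^\phi}$ plus $\delta(\omega_{(-2)})\delta(\omega_{(-1)})$ from $\Delta_\phi$, where Corollary \ref{degad} is used to split the ``first-stage'' $\omega_{(-1)}$ as $\delta(\omega_{(-2)}) + \delta(\omega_{(-1)})$ in triple Sweedler. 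On the RHS the total phase is the sum from the two nested applications of $\delta_{L^\phi}$, where the additivity derived above splits $\hat\delta$ of the intermediate argument as $\delta(\omega_{(-1)}) + \hat\delta(\omega_{(0)})$. Both phases collapse to
\[
\delta(\omega_{(-2)})\,\delta(\omega_{(-1)}) + \delta(\omega_{(-2)})\,\hat\delta(\omega_{(0)}) + \delta(\omega_{(-1)})\,\hat\delta(\omega_{(0)}).
\]
The counit identity $(\epsilon \otimes \id)\delta_{L^\phi} = \id$ is automatic, since the twist is nontrivial only if $\epsilon(\omega_{(-1)}) \neq 0$, which forces $\delta(\omega_{(-1)}) = 0$ by Lemma \ref{counit}.

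For the braiding $\Psi$, well-definedness, the Yang-Baxter hexagon identities, naturality, and compatibility with the twisted actions $\ast$ of Lemma \ref{bimodb} reduce to additivity of $\delta$ and $\hat\delta$ on tensor products, exactly as in the proof of (\ref{braid}) and Theorem \ref{morph}. The main obstacle throughout is the bookkeeping: four degree functions $\mu,\nu,\hat\mu,\hat\nu$ interact across coproducts, coactions, and tensor products, and one must keep the two Sweedler conventions (coproduct versus coaction) carefully separated. Once the coaction analog of Lemma \ref{degprop} is in place, the remaining checks are formal exponential arithmetic.
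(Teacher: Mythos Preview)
Your proposal is correct and takes essentially the same approach as the paper: the paper's own proof consists of the single sentence ``The proof follows exactly the one for the braided Hopf algebra $H_\phi$,'' and what you have written is precisely that argument spelled out, with the coaction analog of Lemma~\ref{degprop} made explicit and the phase comparison carried out in parallel with Theorems~\ref{coass} and~\ref{morph}.
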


\begin{proof}
The proof follows exactly the one for the the braided Hopf algebra $H_\phi$.
\end{proof}

\begin{thm}
With the above definitions $(\Gamma_\phi, d)$ is a braided bicovariant differential calculus.
\end{thm}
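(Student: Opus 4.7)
The plan is to verify, in order: (i) the Leibniz rule for $d: H_\phi \to \Gamma_\phi$ with respect to the twisted product $\ast$; (ii) surjectivity of $d$ in the FODC sense; (iii) braided left and right covariance of $d$; and (iv) the Hopf bimodule compatibility of the coactions $\delta_{L^\phi}, \delta_{R^\phi}$ from Lemma \ref{bibim} with the twisted bimodule action of Lemma \ref{bimodb}. The underlying bimodule and bicomodule structures themselves are already established by those two lemmas, so only these four checks remain.

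For (i), the key fact is that $d$ preserves the bigrading, $\hat{\mu}(dh) = \mu(h)$ and $\hat{\nu}(dh) = \nu(h)$, shown immediately before Lemma \ref{bimodb}. Consequently the phase in $x \ast dy$ and the phase in $dx \ast y$ both coincide with $\Phi(x,y)$, so
$$ d(x \ast y) = \Phi(x,y)\, d(xy) = \Phi(x,y)\bigl((dx)y + x(dy)\bigr) = dx \ast y + x \ast dy, $$
which reduces the twisted Leibniz rule to the original one. For (ii), $\Gamma_\phi = \Gamma$ as vector spaces and $\Phi$ is $U(1)$-valued, so any $\sum_k a_k\, db_k \in \Gamma$ may be rewritten as $\sum_k \Phi(a_k, b_k)^{-1}\, a_k \ast db_k$; surjectivity of $d$ therefore passes from $\Gamma$ to $\Gamma_\phi$.

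For (iii), starting from the original covariance $\delta_L(dh) = h_{(1)} \ox dh_{(2)}$ and using $\hat{\delta}(dh_{(2)}) = \delta(h_{(2)})$, the definition of $\delta_{L^\phi}$ gives
$$ \delta_{L^\phi}(dh) = e^{i\phi\, \delta(h_{(1)})\, \delta(h_{(2)})}\, h_{(1)} \ox dh_{(2)} = (\id \ox d)\Delta_\phi(h), $$
which is the required covariance in view of (\ref{coprod}); the right-sided analogue is identical.

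The main computational obstacle is (iv), the verification of
$$ \delta_{L^\phi}(x \ast \omega) = \Delta_\phi(x) \ast \delta_{L^\phi}(\omega), $$
together with its right-sided analogue, where the right-hand side uses the $\Psi$-braided product of $H_\phi \ox H_\phi$ with $H_\phi \ox \Gamma_\phi$ as in the definition of left covariant bimodule. This proceeds exactly along the lines of Theorem \ref{morph}: expanding both sides produces an exponential whose argument is a sum of $\mu$-$\nu$ cross-terms, and the required cancellations rest on the ``hat'' analogues of Lemma \ref{degprop}, namely $\hat{\mu}(\omega_{(0)}) = \hat{\mu}(\omega)$, $\hat{\nu}(\omega_{(-1)}) = \hat{\mu}(\omega_{(0)})$, and their right-sided counterparts. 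These follow from the bicomodule axioms for $\Gamma$ together with the fact that $\a$ is a Hopf morphism. Once they are invoked, the phase factors on both sides match and the identity reduces to left covariance of $\Gamma$ over $H$; the right-sided version is proved analogously.
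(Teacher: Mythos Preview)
Your proposal is correct and, for the covariance step (iii), is essentially identical to the paper's proof: both compute $(\id\otimes d)\Delta_\phi(x)$ and $\delta_{L^\phi}(dx)$ and match the phases using $\hat\delta(dh)=\delta(h)$.

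The paper's own proof is considerably terser than yours: it asserts that the Hopf bimodule property of $\Gamma_\phi$ has ``already been demonstrated'' by Lemmas~\ref{bimodb} and~\ref{bibim} (even though those lemmas do not spell out the bimodule--coaction compatibility you label (iv)), and it does not mention the Leibniz rule (i) or surjectivity (ii) at all. Your treatment is therefore more complete: you make explicit that $d$ preserving the bigrading forces the twisted and untwisted Leibniz rules to coincide, that the unimodular cocycle $\Phi$ lets surjectivity transfer, and that the compatibility in (iv) follows the pattern of Theorem~\ref{morph} once one has the $\hat\mu,\hat\nu$ analogues of Lemma~\ref{degprop}. These additions fill gaps the paper leaves implicit rather than introduce a different route.
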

\begin{proof}
We have already demonstrated that $\Gamma_\phi$ is a Hopf bimodule, so it 
remains to show that the external derivative $d$ is compatible with the covariance. This
is, however, an immediate consequence of Lemma \ref{bibim}. Indeed, if we have
a homogeneous $x$ then:
$$ (\hbox{id} \otimes d) \Delta_\phi(x) = e^{i\phi \delta(x_{(1)}) \delta(x_{(2)})} x_{(1)} \otimes dx_{(2)}.$$
On the other hand using Lemma \ref{bibim}:
$$ \delta_{L^\phi} (dx) = e^{i\phi \delta(dx_{(-1)}) \hat{\delta}((dx)_{(0)}} (dx)_{(-1)} \otimes
	(dx)_{(0)},$$
and combining it with Lemma \ref{bimodb} and the fact that $(\Gamma,d)$ is a
bicovariant first order differential calculus we obtain that the same holds for the
braided $(\Gamma_\phi,d)$. 
\end{proof}
\section{Outlook}
In this note we have presented a construction of class of braided Hopf algebras, which
are obtained by twisting. It is interesting to observe that this includes the braided
$SU_q(2)$ algebra and therefore, in particular, a family of so-called theta deformations
of the sphere $S^3$, introduced by Matsumoto \cite{Ma91}. These objects have been so far considered only as
algebras corresponding to noncommutative homogenoeus spaces on which some (unbraided) Hopf 
algebras act and coact. 

The existence of braided Hopf algebra symmetries is a new element that could possibly have some 
implications for the geometries of theta-deformed three spheres. In particular, it
would be interesting to verify that the Connes' spectral geometries and spectral triples
for the theta-spheres \cite{CoLa} are compatible with the structure of the braided Hopf algebras.

{\bf Acknowledgements:} AB acknowledges support from a grant from the John Templeton Foundation.

 
\end{document}